\documentclass[reqno]{amsart}

\usepackage{amsmath,amssymb,amsthm,mathtools}
\usepackage{mathrsfs}
\usepackage{enumitem}
\usepackage{hyperref}
\usepackage{xcolor}
\usepackage{graphicx}    
\usepackage{subcaption}
\numberwithin{equation}{section}

\newtheorem{thm}{Theorem}[section]
\newtheorem{prop}[thm]{Proposition}
\newtheorem{lem}[thm]{Lemma}
\newtheorem{cor}[thm]{Corollary}

\theoremstyle{definition}

\newcommand{\C}{\mathbb C}
\newcommand{\CP}{\mathbb{CP}}
\newcommand{\E}{\mathbb E}
\newcommand{\Prob}{\mathbb P}

\newcommand{\FS}{\mathrm{FS}}

\newcommand{\ind}{\mathbf 1}
\newcommand{\Zn}{\mathcal Z_n}

\newcommand{\beq}{\begin{equation}}
\newcommand{\eeq}{\end{equation}}
\newcommand{\mufs}{\mu_{\mathrm{FS}}}

\title[Nearest Zero--Critical Point Distances]
{Nearest zero--critical point distances for Gaussian SU(2) polynomials}

\author{Renjie Feng}
\email{renjie.feng@sydney.edu.au}
\address{Sydney Mathematical Research Institute, University of Sydney}
\date{\today}

\begin{document}

\begin{abstract}
We study nearest zero--critical point distances for the Gaussian SU(2)
polynomial \(p_n\). For each zero \(z_i\) of \(p_n\), let \(D_n(z_i)\) denote its
Fubini--Study distance to the nearest critical point. We prove that the
empirical measure of the rescaled distances \(nD_n(z_i)\) converges weakly
in probability to the law of \(2/|Z|\), where \(Z\) is distributed according
to the Fubini--Study probability measure in the affine chart.

\end{abstract}

\maketitle
 
\section{Introduction}
 
\subsection{Gaussian SU(2) polynomials}
The Gaussian SU(2) polynomial ensemble is one of the standard models in the
study of random analytic functions.  It is characterized by its
invariance under the natural SU(2) action on the Riemann sphere \(\CP^1\simeq S^2\).  Throughout the
paper we work in the affine chart
\(\CP^1\setminus\{\infty\}\simeq\C\).  In this chart the
 Gaussian SU(2) polynomial of degree \(n\) is
\[
        p_n(z)=\sum_{k=0}^n a_k\binom nk^{1/2}z^k, \quad a_k \sim N_{\C}(0,1),
\]
where the coefficients $\{a_k\}_{k=0}^n$ are independent and identically distributed (i.i.d.) standard complex Gaussian random variables, namely 
\[
        \E[a_k]=0,\qquad
        \E[a_k\overline{a_j}]=\delta_{kj},
        \qquad
        \E[a_k a_j]=0.
\]  The covariance kernel of \(p_n\) is
\begin{equation}\label{eq:kernel}
        K_n(z,w):=\E[p_n(z)\overline{p_n(w)}]
        =\sum_{k=0}^n\binom nk(z\overline w)^k
        =(1+z\overline w)^n.
\end{equation} 
Geometrically, \(p_n\) should be viewed as the local expression of a Gaussian
random holomorphic section of the line bundle
\[
        \mathcal O(n)=\mathcal O(1)^{\otimes n}\to\mathbb{CP}^1,
\]
where \(\mathcal O(1)\) is the hyperplane line bundle.  See \cite{BSZ,SZ} for background on Gaussian random holomorphic sections of line
bundles over K\"ahler manifolds.

The standard Fubini--Study metric on \(\CP^1\), written in the affine
coordinate, is 
\[
ds^2 = \frac{4|dz|^2}{(1+|z|^2)^2}.
\]
 Thus, for \(\Delta z\) small,  \beq\label{disdis}d_{\FS}(z, z+\Delta z) = \frac{2|\Delta z|}{1+|z|^2} + O(|\Delta z|^2).\eeq
The Fubini--Study distance on \(\CP^1\) agrees with
the spherical geodesic distance on the unit sphere \(S^2\). 
The associated Fubini--Study probability measure is
 \[d\mufs(z) := \frac{1}{\pi(1+|z|^2)^2} \, d\ell(z),\]
 where $d\ell(z) = dxdy$  denotes Lebesgue measure on \(\C\).  

Let
\[
        \mathcal Z_n=\{z_1,\dots,z_n\}
\]
be the zero set of \(p_n\).  The zeros are simple almost surely.  We define the
empirical zero measure by
\beq\label{mnns}
        \mu_n:=\frac1n\sum_{j=1}^n\delta_{z_j}.
\eeq
This is a random probability measure on \(\mathbb{CP}^1\). Its expectation is
understood in the weak sense: for every bounded measurable test function
\(\varphi\),
\[
        \mathbb E\left[\int \varphi\,d\mu_n\right]
        =
        \int \varphi\,d\mathbb E[\mu_n].
\]
The SU(2) invariance implies the exact identity
(e.g., \cite[Proposition~4.5]{SZ})
\[\label{identi}
        \mathbb E[\mu_n]=\mu_{\FS},
        \qquad n\ge1.
\]
Equivalently, the 1-point correlation function of zeros with respect to 
Lebesgue measure \(d\ell\) in the affine chart is
\begin{equation}\label{eq:1pt}
        \rho_1^{(n)}(z)
        =
        \frac{1}{\pi}\partial_z\partial_{\bar z}\log K_n(z,z)
        =
        \frac{n}{\pi(1+|z|^2)^2}.
\end{equation}
Indeed,
\[
        \frac1n\rho_1^{(n)}(z)\,d\ell(z)=d\mu_{\FS}(z).
\]
In addition to this identity in expectation, Shiffman--Zelditch further proved the weak convergence \cite[Proposition~4.2]{SZ},
\begin{equation}\label{wked}
        \mu_n
        \to 
        \mu_{\FS}
        \qquad\text{almost surely}.
\end{equation}
 That is, for every continuous
test function \(\varphi\in C(\mathbb{CP}^1)\),
\[
        \int \varphi\,d\mu_n
        \to
        \int \varphi\,d\mu_{\FS}
        \qquad\text{almost surely}.
\]
More generally, this almost sure convergence holds for zeros of Gaussian
random holomorphic sections of line bundles over K\"ahler manifolds
\cite[Theorem~1.1]{SZ}.

\subsection{Main results}
As mentioned above, Gaussian SU(2) polynomials should be viewed as
random holomorphic sections of \(\mathcal O(n)\to\mathbb{CP}^1\).  For a section
of a nontrivial line bundle, there is no canonical notion of derivative, and
hence no intrinsic notion of critical point, unless a connection is specified.  Different choices lead to different critical
point sets.

In this paper we fix a particular meromorphic connection
\(\nabla_{\mathrm{mero}}\) on \(\mathcal O(n)\).  This connection has a pole at a
point \(p'\in\mathbb{CP}^1\cong S^2\).  Let \(p\) be the antipodal point of
\(p'\) on \(S^2\).  We choose the affine coordinate \(z\) on
\[
        \mathbb{CP}^1\setminus\{p'\}\simeq\mathbb C
\]
so that
\[
        z(p)=0,
        \qquad
        z(p')=\infty .
\]
With respect to this affine coordinate, the connection we choose is represented locally by
\[
        \nabla_{\mathrm{mero}}^{(z)}=\frac{d}{dz}.
\]
Equivalently, in the coordinate \(w=1/z\) near the pole \(p'\), the same
connection is locally represented by
\[
        \nabla_{\mathrm{mero}}^{(w)}
        =
        \frac{d}{dw}
        -
        \frac{n}{w}.
\]
Thus the connection has a pole at \(w=0\), that is, at \(z=\infty\), or
equivalently at \(p'\).

The critical points considered in this paper are defined with respect to this
meromorphic connection.  Since the connection is represented by \(d/dz\) in the
chosen affine coordinate, the critical point set is
\[
        \mathcal C_n
        :=
        \{\beta\in\mathbb C:p_n'(\beta)=0\}.
\]
Note that the
critical point process \(\mathcal C_n\) is not SU(2)-invariant, even though
the zero process \(\mathcal Z_n\) is.

 As discussed heuristically in \S\ref{subsec:bulk-degenerate} below,  the zero--critical point
pairing has different asymptotic behaviour near the two exceptional points
\(z(p)=0\) and \(z(p')=\infty\).  Near \(p\), the Fubini--Study zero--critical point
distance is predicted to be larger than the \(n^{-1}\) bulk scale, whereas near \(p'\) it is predicted to be
smaller.  Thus both exceptional regimes differ from the bulk regime, where zeros are restricted to a compact subset of
\(\mathbb{CP}^1\setminus\{p,p'\}\). 
This behaviour reflects our choice of the meromorphic connection with a pole at \(p'\).

The study of the relation between zeros and critical points has a long history.  The
Gauss--Lucas theorem states that every critical point of a polynomial lies in
the convex hull of its zeros.  There is also a useful electrostatic
interpretation.  If
$ \{z_1, \dots, z_n\}$ are the zeros of a polynomial $q_n(z)$, then
\beq\label{criteq}
        \frac{q_n'(z)}{q_n(z)}
        =\sum_{j=1}^n\frac1{z-z_j}.
\eeq
Thus a critical point \(\beta\), satisfying \(q_n'(\beta)=0\), is an
equilibrium point of the logarithmic field generated by the zeros (see, e.g., \cite{DennisHannay2003}).  
 In this
paper we derive a scaling law for the distance from a zero to its nearest equilibrium point, namely its nearest critical point.

The phenomenon that zeros and critical points tend to form local pairs was
already observed in the physics literature.  Dennis--Hannay
\cite{DennisHannay2003} used a heuristic argument
to predict the zero--critical point spacing for certain random polynomials; see the discussion around equations (10)--(11) in
\cite{DennisHannay2003}.  Their argument also leads to predictions for the
global difference between the zero and critical point densities.

Rigorous local pairing theorems have been developed in a different, conditional
setting. A key input for the current paper is the local zero--critical point pairing theorem proved by Hanin in
\cite{hanin2, hanin3} (see Proposition~\ref{prop:Hanin-fixed} below).  
If \(\xi\in\CP^1\setminus\{0,\infty\}\) and the Gaussian SU(2) polynomial is
conditioned to have a zero at \(\xi\), then for every \(\varepsilon\in (0,1/2)\), with probability tending to one, there is a
unique critical point \(\beta\) satisfying $n^{-1-\varepsilon}<|\beta-\xi|<n^{-1+\varepsilon}.$

Similar conditional pairing phenomena have also been studied in several other
settings.  For example, one may consider random polynomials of the form
\beq\label{drtr}
        p_n^{\mathrm{i.i.d.}}(z)= (z-\xi)\prod_{i=1}^{n-1}(z-z_i),\eeq
where the \(z_i\)'s are i.i.d. random roots and \(\xi\) is a prescribed
 root.  In this setting, a local zero--critical point pairing
phenomenon emerges near the prescribed root \(\xi\)
\cite{Hanin2017,SW}.  Moreover, it was proved in \cite{KS} that a Gaussian
fluctuation theorem holds for the displacement between such a
conditioned zero and its nearby critical point in this i.i.d.-root model.

The present paper studies a different question.  We first sample a Gaussian
SU(2) polynomial and then, for each zero \(z_i\), measure the geodesic
distance on \(\mathbb{CP}^1\) to its nearest critical point.  We prove a
rescaled limiting law for the empirical measure of these distances.  Thus,
our result is not a conditional fixed-zero pairing theorem, but a
global empirical law for nearest zero--critical point distances.  This global
pairing phenomenon is illustrated in Figure~\ref{fig:combined2}.

\begin{figure}[htbp]
    \centering
    \includegraphics[width=0.45\textwidth]{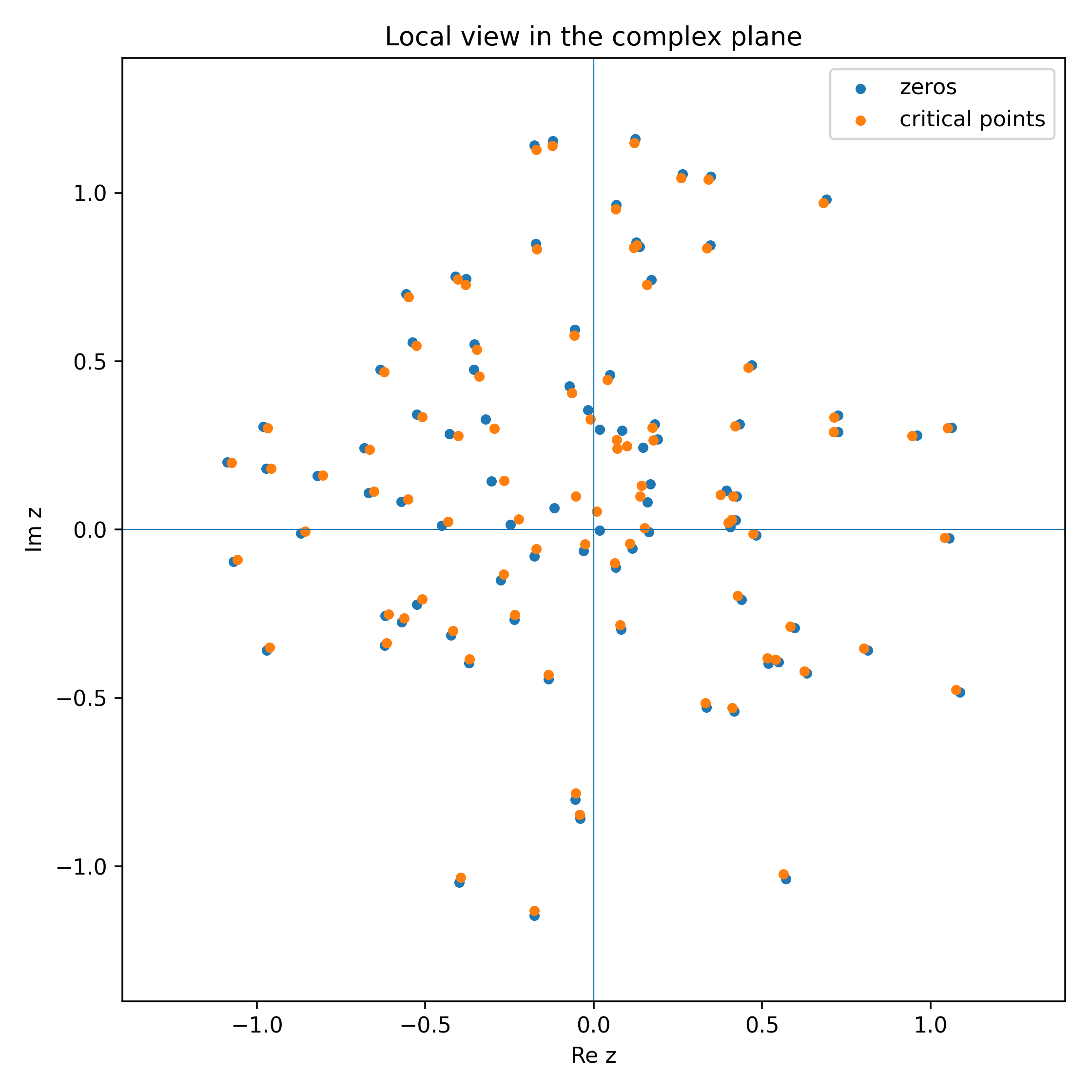}
    \caption{A sample of the zeros and critical points of a Gaussian SU(2)
    polynomial of degree \(n=150\).}
    \label{fig:combined2}
\end{figure}

To state the result precisely, define for each zero \(z_i\in\mathcal Z_n\) the Fubini--Study distance to the nearest critical point by
\[
        D_n(z_i):=\min_{\beta\in\mathcal C_n}d_{\FS}(z_i,\beta).
\]
We then define the empirical measure by
\begin{equation}\label{conmea}
        \nu_n:=\frac1n\sum_{i=1}^n\delta_{nD_n(z_i)}.
\end{equation} 
Let \(Z\sim\mu_{\FS}\), written in the affine coordinate, and define
\[\label{definey}
        Y:=\frac2{|Z|}.
\]
Then 
\beq\label{heavytail}
        \mathbb P(Y\le y)=\mathbb P\left(|Z|\geq \frac 2 y\right)=\frac{y^2}{y^2+4}, 
        \qquad y>0.
\eeq This distribution exhibits a heavy right tail.  Let \(\nu_Y\) denote the law of \(Y\).

\begin{thm}\label{main}
The random probability measures \(\nu_n\) converge weakly in probability to
\(\nu_Y\). Equivalently, for every bounded continuous function
\(f\in C_b([0,\infty))\),
\begin{equation}\label{eq:main-convergence-probability}
        \int f\,d\nu_n
        =
        \frac1n\sum_{i=1}^{n}f\bigl(nD_n(z_i)\bigr)
        \to
        \mathbb E[f(Y)]
        \quad\text{in probability.}
\end{equation}
\end{thm}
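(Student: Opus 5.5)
We deduce Theorem~\ref{main} from a fixed-zero statement, computed under the Palm measure, combined with a first- and second-moment argument.

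\emph{Step 1: the local computation.} Fix $\xi\in\C\setminus\{0\}$ and condition on $p_n(\xi)=0$. Write $p_n(z)=(z-\xi)q(z)$. A critical point $\beta$ satisfies
\[
q(\beta)+(\beta-\xi)q'(\beta)=0,
\qquad\text{so}\qquad
\beta-\xi\approx -\frac{q(\xi)}{q'(\xi)}.
\]
Under the Palm measure at $\xi$ (conditioning on a zero at $\xi$ and weighting by $|p_n'(\xi)|^2$), the pair $(q(\xi),q'(\xi))$ is an explicit Gaussian computable from $K_n$. Its conditional mean structure gives
\[
\frac{q'(\xi)}{q(\xi)}=\frac{n\bar\xi}{1+|\xi|^2}+O_{\Prob}(\sqrt n),
\]
since $\partial_z\log K_n(z,w)|_{w=z}=n\bar z/(1+|z|^2)$ and the fluctuations are of order $\sqrt n$. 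Hence
\[
|\beta-\xi|=\frac{1+|\xi|^2}{n|\xi|}\,(1+o_{\Prob}(1)),
\]
and by \eqref{disdis} this yields $nD_n(\xi)\to 2/|\xi|$ in probability, locally uniformly in $\xi\in\C\setminus\{0\}$.

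To make this rigorous I would invoke Hanin's pairing result (Proposition~\ref{prop:Hanin-fixed}). It gives a unique critical point in the annulus $n^{-1-\varepsilon}<|\beta-\xi|<n^{-1+\varepsilon}$, and, as its proof shows, no critical point inside $|\beta-\xi|\le n^{-1-\varepsilon}$. Hence this paired critical point is the nearest one. I would then refine the location of the paired critical point by a Rouché argument on a disc of radius $\delta/n$ around $-q(\xi)/q'(\xi)$, which is the first-order expansion above.

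\emph{Step 2: first moment.} By Kac--Rice, \eqref{eq:1pt} and \eqref{identi},
\[
\E\int f\,d\nu_n=\int_{\C}\E^{\mathrm{Palm}}_\xi\bigl[f(nD_n(\xi))\bigr]\,d\mufs(\xi).
\]
Fix a compact set $K\subset\C\setminus\{0\}$ whose complement has $\mufs$-mass less than $\eta$. On $K$ we use Step~1 and bounded convergence, and on the complement we use $|f|\le\|f\|_\infty$. Letting $\eta\to0$ gives $\E\int f\,d\nu_n\to\int f(2/|\xi|)\,d\mufs=\E f(Y)$.

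\emph{Step 3: variance.} It suffices to show $\mathrm{Var}\int f\,d\nu_n\to0$. The second moment is an integral against the two-point correlation of zeros, with the pairwise Palm expectation
\[
\E^{\mathrm{Palm}}_{\xi_1,\xi_2}\bigl[f(nD_n(\xi_1))\,f(nD_n(\xi_2))\bigr].
\]
Because $K_n$ normalized decays like $e^{-cn\,d_{\FS}^2}$, the near-diagonal region $d_{\FS}(\xi_1,\xi_2)\le n^{-1/2+\varepsilon}$ carries total normalized mass $o(1)$, since $\rho_2\lesssim\rho_1\rho_1$ up to the diagonal repulsion. Off this region, the Gaussian vectors $(q(\xi_j),q'(\xi_j))$ attached to $\xi_1$ and $\xi_2$ are asymptotically independent. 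Step~1 then applies jointly, so the pairwise Palm expectation factorizes to $f(2/|\xi_1|)f(2/|\xi_2|)+o(1)$, and $\rho_2^{(n)}/(\rho_1^{(n)}\rho_1^{(n)})\to1$ there. Chebyshev's inequality then gives convergence in probability for each $f$. Weak convergence in probability follows by a countable convergence-determining family of test functions $f$.

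The main obstacle is Step~1 in uniform form. We need Hanin's pairing, together with the exclusion of other critical points closer than the paired one, to hold with probability $1-o(1)$ uniformly for $\xi$ in compacts of $\C\setminus\{0\}$, and jointly for two well-separated points in Step~3. I would first try to extract this uniformity directly from Hanin's quantitative estimates, which are stated with explicit error terms in $n$. Failing that, I would redo the Rouché argument with Gaussian tail bounds on $\sup_{|z-\xi|\le n^{-1+\varepsilon}}|q''|$.
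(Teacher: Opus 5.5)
Your Step~3 is where the argument breaks down. It requires that the pairing and localization of Step~1 hold at both points simultaneously under the two-point Palm measure at well-separated $(\xi_1,\xi_2)$, which conditions on zeros at both points and tilts by $|p_n'(\xi_1)|^2|p_n'(\xi_2)|^2$. Proposition~\ref{prop:Hanin-fixed} conditions on a single zero and says nothing about this measure. You leave this, together with the uniformity in $\xi$, as an unresolved obstacle. Asymptotic independence is not the real issue, since the limits $2/|\xi_j|$ are deterministic.

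The missing idea is that no second-moment information on critical points is needed. Put $g(z)=f(2/|z|)$. Applying your Step~2 computation to the integrand $|f(nD_n(\xi))-f(2/|\xi|)|$ gives
\[
\E\Bigl|\frac1n\sum_{i=1}^n f\bigl(nD_n(z_i)\bigr)-\int g\,d\mu_n\Bigr|
\le\int_{\C}\E_{\mathbb Q_\xi}\bigl|f(nD_n(\xi))-f(2/|\xi|)\bigr|\,d\mufs(\xi)\to0 .
\]
Meanwhile $\int g\,d\mu_n\to\int g\,d\mufs=\E[f(Y)]$ almost surely, by the Shiffman--Zelditch law of large numbers \eqref{wked} and the Portmanteau theorem; this applies because $g$ is bounded and continuous off the $\mufs$-null set $\{0\}$. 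This is the paper's argument. Concentration is inherited from the zeros. The critical points enter only through a one-point Palm statement averaged over $\xi$, so dominated convergence replaces any uniformity.

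Step~1 also needs a different mechanism. Your formula $\beta-\xi\approx-q(\xi)/q'(\xi)=-1/S$, with $S=\sum_{j}(\xi-z_j)^{-1}$ over the other zeros, is correct, but it is not a first-order Taylor statement. Since $q''(\xi)/q(\xi)=S^2-\sum_j(\xi-z_j)^{-2}\approx n^2V_{\FS}(\xi)^2$, the term $q''(\xi)w^2$ is as large as $q(\xi)$ when $|w|\asymp1/n$; in fact $q(\beta)\approx e^{-1}q(\xi)$. Linearizing gives $p_n'(\xi+w)\approx q(\xi)+2q'(\xi)w$, which vanishes at $-q(\xi)/(2q'(\xi))$, off by a factor $2$. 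So a Rouch\'e argument driven by Taylor remainders, such as tail bounds on $\sup|q''|$, cannot locate the critical point.

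What varies slowly on the $1/n$ scale is the logarithmic derivative $q'/q=\sum_j(z-z_j)^{-1}$. Controlling it near $\xi$ requires two facts: all other zeros stay at distance $\gg n^{-1+\varepsilon}$, and $\sum_j|\xi-z_j|^{-2}=o(n^{2-\varepsilon})$. The paper gets both from bounds on $\rho_2^{(n)}$ (Lemmas~\ref{lem:palm-spacing} and~\ref{lem:inv-square}). It then solves $0=1/w_I+S_I+\Delta_I$ with $|\Delta_I|\le2|w_I|\sum_{j\ne I}|z_I-z_j|^{-2}$, taking $S_I=nV_{\FS}(z_I)+o_{\Prob}(n)$ from Lemma~\ref{lem:palm-field}.

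Finally, Hanin's statement holds under $\Prob(\cdot\mid p_n(\xi)=0)$, not under the Palm measure. You need to transfer it, for instance by Cauchy--Schwarz with $\E[|p_n'(\xi)|^4\mid p_n(\xi)=0]=2\,\E[|p_n'(\xi)|^2\mid p_n(\xi)=0]^2$.
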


The proof of Theorem~\ref{main} is based on the following bulk estimate. 
Throughout the bulk analysis, we fix an arbitrary \(R>1\) and set
\begin{equation}\label{KR}
        K_R:=\{z\in\mathbb C:R^{-1}\le |z|\le R\}
        \subset \mathbb C\setminus\{0\}.
\end{equation}
All conditional probability statements involving \(K_R\) are understood with
this fixed \(R\).  Theorem \ref{main} will then be obtained in
\S\ref{mainproof} by letting \(R\to\infty\).

We define
\[
        V_{\mathrm{FS}}(z):=\frac{\bar z}{1+|z|^2}.
\] This field will appear as the macroscopic interaction field generated by the
other zeros (see Lemma \ref{lem:palm-field}). 

\begin{lem}
\label{lem:bulk-KR}
Let $R>1$ be fixed. Let \(I\) be uniformly distributed on \(\{1,\ldots,n\}\), independently of
\(p_n\), and let \(z_I\) be the corresponding zero.  Condition on the event
\(z_I\in K_R\).  Let
\(\beta_I\) be the nearest critical point to \(z_I\) in
Fubini--Study distance.  Then, in the affine coordinate, we have \begin{equation}\label{eq:bulk-displacement}
        n(\beta_I-z_I)
        +\frac 1{ V_{\mathrm{FS}}(z_I)}\to 0
\end{equation}
in probability, conditionally on \(z_I\in K_R\).  Consequently, the Fubini--Study distance satisfies 
\begin{equation}\label{eq:bulk-distance}
        nD_n(z_I)-\frac2{|z_I|}\to 0
\end{equation}
in probability, conditionally on \(z_I\in K_R\).  \end{lem}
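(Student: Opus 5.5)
Heuristically, a critical point \(\beta\) near the zero \(z_I\) solves, via \eqref{criteq},
\[
0=\frac{p_n'(\beta)}{p_n(\beta)}=\frac{1}{\beta-z_I}+\sum_{j\ne I}\frac{1}{\beta-z_j},
\]
so \(\beta-z_I\approx -1/F_I\) with \(F_I:=\sum_{j\ne I}(z_I-z_j)^{-1}\). The plan is to show that, for a size-biased zero in \(K_R\), one has \(F_I/n\to V_{\FS}(z_I)\) in probability. This is the role of Lemma~\ref{lem:palm-field}. The macroscopic prediction is \(\int (z-w)^{-1}\,d\mufs(w)=\bar z/(1+|z|^2)\); a direct Cauchy-transform computation confirms it, since \(\partial_{\bar z}\) of this is \(\pi\) times the density of \(\mufs\) and it vanishes at \(\infty\). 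Granting this, \(n(\beta_I-z_I)\approx -n/F_I\to -1/V_{\FS}(z_I)\), which is \eqref{eq:bulk-displacement}. Then \eqref{disdis} gives
\[
nD_n(z_I)\approx \frac{2|z_I|^{-1}(1+|z_I|^2)}{1+|z_I|^2}=\frac{2}{|z_I|},
\]
uniformly on \(K_R\), which is \eqref{eq:bulk-distance}.

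\textbf{Step 1 (reduction to a Palm/fixed-zero statement).} Since \(I\) is uniform and independent of \(p_n\), the law of \((z_I,p_n)\) is the size-biased (Palm) law. Its first marginal is \(\E\mu_n=\mufs\), and conditionally on \(z_I=\xi\), \(p_n\) has the law of the Gaussian SU(2) polynomial conditioned to vanish at \(\xi\). By SU(2)-invariance of the zero process, together with the explicit form of \(\rho_1^{(n)}\), it suffices to prove the convergence for the conditioned polynomial uniformly in \(\xi\in K_R\), and then integrate against \(\mufs\) restricted to \(K_R\).

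\textbf{Step 2 (the field).} For fixed \(\xi\), write \(p_n(z)=(z-\xi)q(z)\), so that \(F=q'(\xi)/q(\xi)\). In the conditioned model, \(q(\xi)=p_n'(\xi)\) and \(q'(\xi)=p_n''(\xi)/2\) are jointly Gaussian, with covariances computable from \(K_n\) in \eqref{eq:kernel} after conditioning on \(p_n(\xi)=0\). From the Gaussian regression \(q'(\xi)=\kappa\, q(\xi)+G\), with \(G\) independent of \(q(\xi)\), I expect \(\kappa\sim n\bar\xi/(1+|\xi|^2)\). The noise term \(G/q(\xi)\) should be \(O(n^{1/2})\) times a ratio of Gaussians, which is tight, so \(F/n\to V_{\FS}(\xi)\) in probability. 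This gives Lemma~\ref{lem:palm-field}.

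\textbf{Step 3 (localizing the critical point).} This is the main obstacle: \(\beta-\xi\approx -1/F\) must be upgraded to a rigorous statement, and one must show that \(\beta\) is the \emph{nearest} critical point in Fubini--Study distance. I would first invoke Hanin's theorem (Proposition~\ref{prop:Hanin-fixed}). It gives, with probability tending to one, a unique critical point in the annulus \(n^{-1-\varepsilon}<|\beta-\xi|<n^{-1+\varepsilon}\), and I would check that its statement also excludes critical points in \(|\beta-\xi|\le n^{-1-\varepsilon}\). Next, apply Rouché to \(p_n'\) against the comparison function \(q(\xi)+(z-\xi)\,2q'(\xi)\) on a small disc around \(\xi-1/F\) of radius \(\delta/n\); the remainder is controlled by Cauchy estimates on \(p_n'''\), of size \(n^{3/2}\) relative to \(|q(\xi)|\sim n^{1/2}\). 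This pins that unique critical point within \(\delta/n\) of \(\xi-1/F\).

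Finally, since \(|1/V_{\FS}|\) is bounded on \(K_R\), no other critical point lies within \(n^{-1+\varepsilon}\) of \(\xi\). Euclidean and Fubini--Study distances are comparable on \(K_R\), so this paired critical point is the FS-nearest one. Letting \(\delta\to0\) completes the argument.
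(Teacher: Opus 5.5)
\textbf{There is a genuine gap in Step 3, which is the core of the lemma.} With $p_n(z)=(z-\xi)q(z)$, your comparison function $q(\xi)+2(z-\xi)q'(\xi)=p_n'(\xi)+(z-\xi)p_n''(\xi)$ vanishes at $\xi-1/(2F)$, not at $\xi-1/F$. Since $|F|\approx n|V_{\mathrm{FS}}(\xi)|\le n/2$, that zero is at distance about $1/n$ or more from the center of your disc of radius $\delta/n$. So if the remainder really were negligible, Rouch\'e would show that $p_n'$ has \emph{no} zero near $\xi-1/F$.

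The remainder is in fact not negligible, because the bound $p_n'''=O(n^{3/2})$ ignores the drift. Your own Step 2 gives $p_n''(\xi)=2q'(\xi)\approx 2nV_{\mathrm{FS}}(\xi)p_n'(\xi)$, which is of order $n^{3/2}$. Likewise $p_n'''(\xi)$ is of order $n^2|p_n'(\xi)|\sim n^{5/2}$. So at $|z-\xi|\asymp 1/n$ every Taylor term of $p_n'$ at $\xi$ has the same size $n^{1/2}$. Heuristically,
\[
p_n'(\xi+w)\approx p_n'(\xi)e^{Fw}(1+Fw)=p_n'(\xi)\bigl(1+2Fw+\tfrac32F^2w^2+\cdots\bigr),
\]
so no finite Taylor truncation of $p_n'$ at $\xi$ locates the critical point. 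A Rouch\'e argument can be rescued only after removing the deterministic frame: $p_n(z)$ has the same law as $(1+|\xi|^2)^{-n/2}(1+\bar\xi z)^n\tilde p_n\bigl((z-\xi)/(1+\bar\xi z)\bigr)$ with $\tilde p_n$ again Gaussian SU(2). It is the second factor that gains only $\sqrt n$ per derivative at scale $1/n$.

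What is really missing is a justification of $\sum_{j\ne I}(\beta-z_j)^{-1}\approx F_I$ in your heuristic, i.e.\ control of how the other zeros near $z_I$ affect the paired critical point. The paper works with the exact identity $0=1/w_I+S_I+\Delta_I$ and bounds $|\Delta_I|\le 2|w_I|\sum_{j\ne I}|z_I-z_j|^{-2}$. This needs two inputs absent from your plan:
\begin{itemize}
\item a Palm spacing bound $\min_{j\ne I}|z_I-z_j|\ge n^{-3/4-\delta}$ with high probability, from the quadratic repulsion of $\rho_2^{(n)}$; together with Hanin's $|w_I|<n^{-1+\varepsilon}$ this gives $|w_I/(z_I-z_j)|\le n^{-1/4+\varepsilon+\delta}=o(1)$;
\item the bound $\sum_{j\ne I}|z_I-z_j|^{-2}=O_{\mathbb P}(n\log n)$, which gives $\Delta_I=O_{\mathbb P}(n^{\varepsilon}\log n)=o_{\mathbb P}(n)$.
\end{itemize}

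\textbf{Step 1 also misidentifies the law seen from $z_I$.} Given $z_I=\xi$, $p_n$ is not the polynomial conditioned on $p_n(\xi)=0$, but its $|p_n'(\xi)|^2$-tilt (Kac--Rice). This is harmless for Step 2: the tilt only reweights $q(\xi)$, and $G$ stays independent of it. But Hanin's theorem is stated for the untilted conditional law, so it must be transferred. The paper does this by Cauchy--Schwarz using $\mathbb E[|p_n'(\xi)|^4\mid p_n(\xi)=0]=2\,\mathbb E[|p_n'(\xi)|^2\mid p_n(\xi)=0]^2$.

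Also, Hanin's constant $C(\xi,\varepsilon)$ depends on $\xi$, so the uniformity in $\xi\in K_R$ that your reduction asks for is not available. What suffices is pointwise convergence in $\xi$ plus dominated convergence over $\mu_{\mathrm{FS}}|_{K_R}$.
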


\subsection{Bulk and degenerate regimes}
\label{subsec:bulk-degenerate}

We now explain heuristically why the limiting distribution in
\eqref{heavytail} has a heavy right tail, and why the proof naturally separates
the problem into a bulk regime and degenerate regimes.

The key object is the macroscopic interaction field generated by the other
zeros.  Let \(z_I\) be a uniformly chosen zero, and let \(\beta_I\) be its nearest
critical point.  In the proof of Lemma~\ref{lem:bulk-KR}, we will show that the leading-order
approximation is
\[
        \beta_I-z_I
        \approx
        -\frac1{nV_{\mathrm{FS}}(z_I)},
        \qquad
        D_n(z_I)\approx \frac2{n|z_I|}.
\]
This approximation is uniform only on regions where \(V_{\mathrm{FS}}\) is
bounded away from zero.  For this reason we first restrict to \(K_R\), defined
in \eqref{KR}.  On each fixed \(K_R\), the field \(V_{\mathrm{FS}}\) is bounded
away from zero, and the \(n^{-1}\)-scale zero--critical point pairing is stable.
This is the bulk regime.

There are two exceptional regimes where the macroscopic interaction
field degenerates or tends to zero. Note that 
\[
        V_{\mathrm{FS}}(0)=0,
        \qquad
        V_{\mathrm{FS}}(z)\to0
        \quad\text{as }\,\, |z|\to\infty .
\]
However, their effects on the Fubini--Study distance are very different.  The
bulk prediction gives
\[
        nD_n(z)\approx \frac2{|z|}\to0, \qquad |z|\to\infty. 
\]
Therefore, zeros near \(p'\), the pole of $\nabla_{\mathrm{mero}}$,  contribute small values to the
limiting law; equivalently, the zero--critical point distance  near $p'$ is predicted to be smaller than
the typical \(n^{-1}\) bulk scale.  

However, near the point \(p\), where \(z(p)=0\), 
\[
        nD_n(z)\approx \frac2{|z|}\to\infty,
        \qquad z\to0 .
\]
Therefore zeros close to \(p\) produce large values of the rescaled distance;
equivalently, the zero--critical point distance  near $p$ is predicted to be larger than the typical \(n^{-1}\)
bulk scale.  This mechanism leads to the heavy-tailed distribution in
\eqref{heavytail}.  This behaviour is also suggested by
Figure~\ref{fig:combined2}: near the origin, the zero--critical point pairing
appears less uniform, and larger zero--critical point distances are observed.

\subsection{Discussion and open problems}
\label{sec:discussion}
The present work suggests a general interaction field perspective for nearest
zero--critical point distance problems.  Suppose that a sequence of Gaussian
random polynomials admits a deterministic limiting zero distribution, and that the
field generated by the remaining zeros admits a deterministic macroscopic
limit as in Lemma \ref{lem:palm-field}.  Then one expects the local zero--critical point displacement to be
governed by this macroscopic interaction field, in the same way as
\(V_{\mathrm{FS}}\) governs the displacement in the Gaussian SU(2) case. 
However, the validity of this mechanism may depend on a separation of scales.  Roughly
speaking, the one-to-one pairing mechanism may persist only when
\beq\label{condddd}
        \text{zero--critical point spacing}
        \ll
        \text{minimum zero--zero spacing}.
\eeq
In the proof of Lemma~\ref{lem:bulk-KR}, this separation appears in
\eqref{ratio}: the ratio between the zero--critical point spacing and the
minimum  zero--zero spacing must tend to zero.  This
condition may fail in other classical models.  For example, for the classical Kac
polynomial
\[
        p_n^{\mathrm{Kac}}(z)
        =
        \sum_{k=0}^n a_k z^k,
        \qquad
        a_k\sim N_{\mathbb C}(0,1),
\]
the zeros accumulate near the unit circle, where the typical zero--zero spacing
is already of order \(n^{-1}\).  This is the same scale as the expected
zero--critical point spacing, which indicates that \eqref{condddd} fails.  Hence a critical point may interact with several
nearby zeros on the same scale, and the simple one-to-one pairing mechanism is
not expected to apply directly.

More generally, Theorem~1 of \cite{hanin2} establishes a fixed-zero pairing theorem
for general Gaussian random holomorphic sections over Riemann surfaces.  It is natural to ask whether an empirical nearest-distance
law can be proved in this broader setting.  Again, one expects a macroscopic
interaction field generated by the zeros to control the local displacement.
Understanding the degeneracy set of this field on the underlying Riemann surface
should be a key part of such a general theory.

One may also ask for fluctuation theorems.  A Gaussian fluctuation theorem was
proved in \cite{KS} for the displacement between the prescribed root
and its nearby critical point in the i.i.d.-root model; see \eqref{drtr}.  In
the Gaussian SU(2) setting, the analogous local problem is to understand the
fluctuations of 
\[
         n(\beta_I-z_I)+\frac1{V_{\mathrm{FS}}(z_I)} \quad\text{and}\quad   nD_n(z_I)- \frac2{|z_I|}
\]
for a uniformly chosen zero \(z_I\) in the bulk (away from both 0 and $\infty$).  This should be viewed as a
bulk-conditioned local fluctuation problem.

A global and more delicate problem is to study fluctuations of empirical
linear statistics such as
\[
        \frac1n\sum_{i=1}^n f\bigl(nD_n(z_i)\bigr)
\]
around their limiting means.  Because the limiting law has a heavy right tail,
the fluctuation behaviour is expected to depend sensitively on the class of test functions.  For compactly
supported smooth test functions, a Gaussian fluctuation theorem may still
be possible, but it would require quantitative control of the variance and the
correlations between the nearest zero--critical point distances associated with
different zeros.  We leave these questions for future work.

\section{Macroscopic interaction field}

Let $\label{zn}\Zn = \{z_1, \dots, z_n\}$
be the zero set of the Gaussian SU(2) polynomial \(p_n\).  For a zero \(z_i\), define
\[\label{eq:S-def}
        S_i:=\sum_{j\ne i}\frac1{z_i-z_j}.
\]
Recall that \(I\) is uniformly distributed on \(\{1,\dots,n\}\), independently of the polynomial.
\begin{lem}
\label{lem:palm-field}  
  Conditionally on \(z_I\in K_R\),
\begin{equation}\label{eq:palm-field}
        \frac1n S_I- V_{\mathrm{FS}}(z_I)\to 0
 \quad \mbox{in probability}. 
\end{equation}

\end{lem}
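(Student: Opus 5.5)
The approach rests on the algebraic identity
\[
        S_i=\sum_{j\ne i}\frac1{z_i-z_j}=\frac{p_n''(z_i)}{2p_n'(z_i)},
\]
which holds at every simple zero. Differentiate $p_n(z)=(z-z_i)q(z)$ twice to get $p_n'(z_i)=q(z_i)$, $p_n''(z_i)=2q'(z_i)$, and $q'/q=\sum_{j\neq i}(z-z_j)^{-1}$. This turns Lemma~\ref{lem:palm-field} into a statement about the jet $(p_n'(\xi),p_n''(\xi))$ at a zero $\xi$, which Kac--Rice makes computable. Since $\E[\mu_n]=\mu_{\FS}$, we have $\Prob(z_I\in K_R)=\mu_{\FS}(K_R)>0$ for every $n$, so it suffices to show
\[
\frac1n\,\E\Bigl[\#\{i: z_i\in K_R,\ |S_i/n-V_{\FS}(z_i)|>\varepsilon\}\Bigr]\to0 .
\]
By the Kac--Rice formula this equals
\[
\frac1n\int_{K_R}\E\Bigl[|p_n'(\xi)|^2\,\ind_{\{|p_n''(\xi)/(2np_n'(\xi))-V_{\FS}(\xi)|>\varepsilon\}}\Bigm| p_n(\xi)=0\Bigr]\,\frac{d\ell(\xi)}{\pi K_n(\xi,\xi)} .
\]
Without the indicator, the integrand integrates to $\mu_{\FS}(K_R)$; this is \eqref{eq:1pt}.

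The main step is the conditional Gaussian computation, uniform in $\xi\in K_R$. Conditioned on $p_n(\xi)=0$, the pair $(p_n'(\xi),p_n''(\xi))$ is centered complex Gaussian. Its covariances come from differentiating $K_n(z,w)=(1+z\bar w)^n$ and subtracting the projection onto $p_n(\xi)$. The plan is to write
\[
p_n''(\xi)=c_n(\xi)\,p_n'(\xi)+W_n(\xi),
\]
with $W_n$ independent of $p_n'(\xi)$ under the conditioning, and then show three things:
\begin{enumerate}[label=(\roman*)]
\item $c_n(\xi)=2(n-1)\bar\xi/(1+|\xi|^2)+O(1)$, so $c_n/(2n)\to V_{\FS}(\xi)$ uniformly on $K_R$;
\item $\operatorname{Var}(W_n)=O(n^2)$;
\item $\operatorname{Var}(p_n'(\xi)\mid p_n(\xi)=0)\asymp n$, with constants uniform on $K_R$.
\end{enumerate}
SU(2) invariance, or a direct change of variables, reduces these to the point $\xi=0$ after accounting for the Möbius factor; alternatively one computes them directly from the kernel. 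Given (i)--(iii), the error is
\[
\frac{p_n''}{2np_n'}-\frac{c_n}{2n}=\frac{W_n}{2np_n'},
\]
a ratio of two independent Gaussians of sizes $n$ and $n^{3/2}$. It is therefore $O_P(n^{-1/2})$.

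The remaining point is that the Kac--Rice integrand carries the weight $|p_n'|^2$. Under the weight $|p_n'|^2/\E|p_n'|^2$, the variable $p_n'(\xi)/\sqrt n$ has the size-biased law with density proportional to $|u|^2e^{-|u|^2/\sigma^2}$. This law has no atom near zero, so the event $\{|W_n/(2np_n')|>\varepsilon/2\}$ still has weighted probability $O(1/(\varepsilon^2 n))$. A direct bound is available: under the weighted law, $\E|W_n|^2/(n^2|p_n'|^2)$ is finite and of order $1/n$, because the weight cancels the $|p_n'|^{-2}$ singularity. Chebyshev then gives the claim uniformly in $\xi\in K_R$, and dominated convergence over $K_R$ finishes the proof.

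The main obstacle is the explicit and uniform covariance computation in (i)--(iii), in particular verifying that the regression coefficient is $2(n-1)\bar\xi/(1+|\xi|^2)$ to leading order. The first attempt will be the reduction to $\xi=0$ via the isometry $z\mapsto(z-\xi)/(1+\bar\xi z)$, which maps $p_n$ to $(1+\bar\xi z)^n$ times an independent copy. The factor $(1+\bar\xi z)^n$ produces exactly the $n\bar\xi$-type drift term in the second derivative, while $p_n''(0)$ of the copy gives the $O(n)$ noise $W_n$.
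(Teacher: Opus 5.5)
Your proposal is correct and follows essentially the same route as the paper's proof: the identity $p_n''(z_i)/p_n'(z_i)=2S_i$, then the regression $p_n''=m_n p_n'+\Xi_n$ under $p_n(\xi)=0$ with $m_n(\xi)=2(n-1)\bar\xi/(1+|\xi|^2)$ exactly, then the $|p_n'|^2$-tilted second-moment bound $\frac{n-1}{2n^2(1+|\xi|^2)^2}$ with Chebyshev uniformly on $K_R$, and finally Kac--Rice/Campbell--Palm to pass to a uniformly chosen zero (your M\"obius reduction to $\xi=0$ is just another way to get the covariances the paper obtains by differentiating the kernel). One slip: your (ii) and (iii) drop the common factor $(1+|\xi|^2)^{n}$, since the paper finds $\operatorname{Var}=n(1+|\xi|^2)^{n-2}$ and $2n(n-1)(1+|\xi|^2)^{n-4}$; this factor cancels in the ratio $W_n/(2np_n')$, so the argument is unaffected.
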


\begin{proof} Note that all zeros of $p_n$ are simple almost surely. 
At a zero \(z_i\) of \(p_n\),  \[\label{eq:log-derivative-identity}
        \frac{p_n''(z_i)}{p_n'(z_i)}
        =
        2\sum_{j\ne i}\frac1{z_i-z_j}
        =2S_i.
\]
Thus it suffices to prove, conditioning on \(z_I\in K_R\), 
\begin{equation}\label{eq:pprime-ratio-goal}
        \frac1{2n}\frac{p_n''(z_I)}{p_n'(z_I)}
       - \frac{\overline{z_I}}{1+|z_I|^2} \to 0
        \quad \mbox{in probability}.
\end{equation}
Fix \(z\in K_R\), and write
\[
        A=p_n(z),\qquad B=p_n'(z),\qquad C=p_n''(z),
        \qquad q=1+|z|^2.
\]
Next we analyze the asymptotic behavior of \(\frac1{2n}\frac{C}{B}\) under the conditional law given \(A=0\).
The vector \((A,B,C)\) is centered complex Gaussian. From the covariance kernel
\eqref{eq:kernel}, direct differentiation gives
$$\begin{aligned}
        \E[A\bar A]&=q^n,\label{eq:cov-AA}\\
        \E[B\bar A]&=n\bar z q^{n-1},\label{eq:cov-BA}\\
        \E[C\bar A]&=n(n-1)\bar z^2q^{n-2},\label{eq:cov-CA}\\
        \E[B\bar B]&=nq^{n-2}(1+n|z|^2),\label{eq:cov-BB}\\
        \E[C\bar B]&=2n(n-1)\bar z q^{n-2}
        +n(n-1)(n-2)z\bar z^2q^{n-3}.\label{eq:cov-CB}
\end{aligned}$$
Conditioning on \(A=0\), we obtain
\begin{equation}\label{eq:var-B-cond}
        \E[|B|^2\mid A=0]
        =
        \E[B\bar B]-\frac{|\E[B\bar A]|^2}{\E[|A|^2]}
        =
        nq^{n-2}.
\end{equation}
Similarly,
\begin{align*}
        \E[C\bar B\mid A=0]
        &=
        \E[C\bar B]-\frac{\E[C\bar A]\E[A\bar B]}{\E[|A|^2]}\notag\\
        &=
        2n(n-1)\bar z q^{n-3}.
        \label{eq:CB-cond}
\end{align*}
Therefore the conditional regression of \(C\) on \(B\), given \(A=0\), is
\begin{equation}\label{eq:C-regression}
        C=m_n(z)B+\Xi_n(z),
        \qquad
        m_n(z)=\frac{\E[C\bar B\mid A=0]}{\E[|B|^2\mid A=0]}
        =2(n-1)\frac{\bar z}{q},
\end{equation}
where \(\Xi_n(z)\) is independent of \(B\) under the conditional law
\(\Prob(\cdot\mid A=0)\).

We also need the conditional variance of the residual. Differentiating the
kernel gives
\[
\begin{aligned}
        \E[C\bar C]
        =
        n(n-1)q^{n-4}\bigl[&2q^2+4(n-2)|z|^2q 
        +(n-2)(n-3)|z|^4\bigr].
\end{aligned}
\]
A direct computation yields
\begin{align}
        \E[|C|^2\mid A=0]
        &=
        \E[C\bar C]-\frac{|\E[C\bar A]|^2}{\E[|A|^2]}
        \notag\\
        &=
        n(n-1)q^{n-4}\bigl[2+4(n-1)|z|^2\bigr].
        \label{eq:C-cond-var}
\end{align}
Using \eqref{eq:C-regression}, \eqref{eq:var-B-cond}, and
\eqref{eq:C-cond-var}, we find
\begin{align}
        \E[|\Xi_n(z)|^2\mid A=0]
        &=
        \E[|C|^2\mid A=0]
        -
        |m_n(z)|^2\E[|B|^2\mid A=0]
        \notag\\
        &=
        2n(n-1)q^{n-4}.
        \label{eq:residual-var}
\end{align} 

The Palm measure of the zero process at \(z\) is not simply the conditional law
\(\Prob(\cdot\mid A=0)\).  By the Kac--Rice formula, it is the
\(|B|^2\)-tilt of this conditional law:
\[
        d\mathbb Q_z
        :=
        \frac{|B|^2}{\E[|B|^2\mid A=0]}
        d\Prob(\cdot\mid A=0).
\]
We refer to \cite[Section~4.4]{cdk} for background on Palm measures.
In particular, for every measurable \(F=F(z,B,C)\) satisfying
\beq \label{condi2}
        \E\left[|B|^2|F(z,B,C)|\mid A=0\right]<\infty,
\eeq
we have
\beq\label{qzfz}
        \mathbb E_{\mathbb Q_z}[F(z,B,C)]
        =
        \frac{
        \mathbb E\left[|B|^2F(z,B,C)\mid A=0\right]
        }{
        \mathbb E\left[|B|^2\mid A=0\right]
        }.
\eeq

We now derive the Campbell--Palm identity for jet observables. In the present
setting, this identity is precisely the Kac--Rice formula rewritten in Palm
form. 

By the Kac--Rice formula, for any test function \(\psi\) (cf. \cite[Theorem 11.2.1]{AT}),
\[
        \mathbb E\left[\sum_{\zeta\in\mathcal Z_n}\psi(\zeta)\right]
        =
        \int_{\mathbb C}\psi(z)
        \mathbb E[|B|^2\mid A=0]\phi_A(0)\,d\ell(z),
\]
where $\phi_A(0)$ is the density at 0 of the complex Gaussian random variable $p_n(z)$. Thus the 1-point correlation function with respect to $d\ell$ is given by
\beq\label{rho1zd}
        \rho_1^{(n)}(z)
        =
        \mathbb E[|B|^2\mid A=0]\phi_A(0).
\eeq 
More generally, applying the Kac--Rice formula to the jet observable
\(F(z,B,C)\), we obtain
\[
\begin{aligned}
&\mathbb E\left[
        \sum_{\zeta\in\mathcal Z_n}
        \psi(\zeta)
        F\bigl(\zeta,p_n'(\zeta),p_n''(\zeta)\bigr)
        \right]  \\
&\qquad =
        \int_{\mathbb C}
        \psi(z)
        \mathbb E\left[
        |B|^2F(z,B,C)\mid A=0
        \right]\phi_A(0)\,d\ell(z).
\end{aligned}
\]
By \eqref{qzfz} and \eqref{rho1zd}, the preceding
Kac--Rice formula can be rewritten as the Campbell--Palm identity, 
\beq\label{campp}
\begin{aligned}
&\mathbb E\left[
        \sum_{\zeta\in\mathcal Z_n}
        \psi(\zeta)
        F\bigl(\zeta,p_n'(\zeta),p_n''(\zeta)\bigr)
        \right]  \\
&\qquad =
        \int_{\mathbb C}
        \psi(z)\rho_1^{(n)}(z)
        \mathbb E_{\mathbb Q_z}[F(z,B,C)]\,d\ell(z).
\end{aligned}
\eeq

We now finish the proof of Lemma \ref{lem:palm-field}. 
\(B\) is a non-degenerate complex Gaussian random variable under
\(\mathbb P(\,\cdot\,\mid A=0)\), and therefore
\[
        \mathbb P(B=0\mid A=0)=0.
\]
Since \(\mathbb Q_z\ll \mathbb P(\,\cdot\,\mid A=0)\), we also have
\[\mathbb Q_z(B=0)=0.\]  Hence the ratio \(C/B\) is well-defined
\(\mathbb Q_z\)-almost surely.  
Dividing \eqref{eq:C-regression} by
\(2nB\), we obtain, under \(\mathbb Q_z\),
\[\label{eq:CB-under-Q}
        \frac{1}{2n}\frac{C}{B}
        =
        \left(1-\frac1n\right)\frac{\bar z}{q}
        +
        \frac{1}{2n}\frac{\Xi_n(z)}{B}.
\] Note that the integrability condition \eqref{condi2} is satisfied for
\[
        \left|\frac1{2n}\frac{\Xi_n(z)}{B}\right|^2,
\]
since the factor \(|B|^2\) in the Palm measure cancels the
\(|B|^{-2}\) singularity.  By \eqref{eq:var-B-cond} and
\eqref{eq:residual-var}, there exists a uniform $C_R>0$ such that
\begin{align*}
        \E_{\mathbb Q_z}\left[
        \left|\frac1{2n}\frac{\Xi_n(z)}{B}\right|^2
        \right]
        &=
        \frac1{4n^2}
        \frac{\E[|\Xi_n(z)|^2\mid A=0]}{\E[|B|^2\mid A=0]}
        \notag\\
        &=
        \frac1{4n^2}
        \frac{2n(n-1)q^{n-4}}{nq^{n-2}}
        \notag\\
        &=
        \frac{n-1}{2n^2q^2}
        \le
        \frac{C_R}{n}.
        \label{eq:residual-tilt-bound}
\end{align*}
For any \(\eta>0\), since \(|\bar z|/q\) is bounded uniformly on \(K_R\), we
may choose \(n\) large enough so that
\[
        \frac1n\frac{|\bar z|}{q}<\frac{\eta}{2}
\]
uniformly for \(z\in K_R\). 
 By the decomposition
\[
\frac{1}{2n}\frac{C}{B} - \frac{\bar z}{q}
= -\frac{1}{n}\frac{\bar z}{q} + \frac{1}{2n}\frac{\Xi_n(z)}{B},
\]  and by the triangle inequality and Chebyshev's inequality, we have
\[ \mathbb Q_z\!\left( \Bigl|\frac{1}{2n}\frac{C}{B} - \frac{\bar z}{q}\Bigr| > \eta \right)
\le
\mathbb Q_z\!\left( \Bigl|\frac{1}{2n}\frac{\Xi_n(z)}{B}\Bigr| > \frac{\eta}{2} \right)
\le \frac{4}{\eta^2}\,
\mathbb E_{\mathbb Q_z}\!\left[ \Bigl|\frac{1}{2n}\frac{\Xi_n(z)}{B}\Bigr|^2 \right] \le \frac{4}{\eta^2}\frac{C_R}{n}.
\]
Therefore, 
 \begin{equation}\label{eq:Qz-bad-bound}
       \sup_{z\in K_R}
        \mathbb Q_z\left(
        \left|\frac1{2n}\frac{C}{B}-\frac{\bar z}{q}\right|>\eta
        \right)
        \to 0.
\end{equation}
It remains to pass from fixed \(z\) to a uniformly chosen zero. Let
\[
        \Phi_n(z)
        :=
        \ind\left\{
        \left|\frac1{2n}\frac{p_n''(z)}{p_n'(z)}
        -\frac{\bar z}{1+|z|^2}\right|>\eta
        \right\}.
\]
By the Campbell--Palm identity \eqref{campp}, 
\begin{align*}
        \E\left[\sum_{z_i\in\Zn\cap K_R}\Phi_n(z_i)\right]
        &=     \int_{K_R}
        \mathbb E_{\mathbb Q_z}[\Phi_n(z)]\rho_1^{(n)}(z)d\ell(z).
\end{align*}
Since  \(I\) is uniformly distributed on \(\{1,\dots,n\}\), independently of $p_n$, we have 
\begin{align*}
        \Prob\left(\Phi_n(z_I)=1\mid z_I\in K_R\right)
        &=
        \frac{
        \mathbb E\left[
        \frac1n\#\{z_i\in\Zn\cap K_R:\Phi_n(z_i)=1\}
        \right]
        }{
        \mathbb E\left[
        \frac1n\#\{z_i\in\Zn\cap K_R\}
        \right]
        }\\
        &=
        \frac{
        \int_{K_R}\mathbb E_{\mathbb Q_z}[\Phi_n(z)]\rho_1^{(n)}(z)\,d\ell(z)
        }{
        \int_{K_R}\rho_1^{(n)}(z)\,d\ell(z)
        }\\
        &\le
        \sup_{z\in K_R}\mathbb E_{\mathbb Q_z}[\Phi_n(z)].
\end{align*}
The last quantity tends to \(0\) by \eqref{eq:Qz-bad-bound}.  This proves
\eqref{eq:pprime-ratio-goal}, and hence \eqref{eq:palm-field}.
\end{proof}

\section{Zero--zero spacings}

In this section we collect some zero--zero spacing estimates needed later.  The
arguments use only the 1- and 2-point correlation functions of zeros.

Bleher--Shiffman--Zelditch proved that the rescaled correlation functions
of zeros of Gaussian random holomorphic sections over general K\"ahler
manifolds have universal limits \cite[Theorem~3.6]{BSZ}.  For Riemann surfaces,
further estimates were obtained in \cite{FengYao}. 
For the purposes of the present paper, we only need the following two upper bounds.   The first is the
global upper bound \cite[Theorem 5]{FengYao}, 
\begin{equation}\label{globds}
        \rho_2^{(n)}(z,w)\le Cn^2.
\end{equation}
The second is the short-range quadratic repulsion estimate (cf. \cite[Theorem 3]{FengYao}): for $n$ sufficiently large, uniformly for \(z\in K_R\) and
\(
        |z-w|\le \frac{\log^2 n}{\sqrt n},
\)
one has
\begin{equation}\label{localbound}
        \rho_2^{(n)}(z,w)
        \le
        C_R n^3 |z-w|^2 .
\end{equation}

Now we turn to the zero--zero spacing estimates for the Gaussian SU(2) polynomial. By the definition of correlation functions of zeros, if  \(I\) is uniformly distributed on \(\{1,\dots,n\}\), 
independently of $p_n$, then \[\label{eq:uniform-zero-one-point}
        \mathbb E\bigl[f(z_I)\bigr]
        =
        \frac1n
        \int_{\C}f(z)\rho^{(n)}_1(z)\,d\ell(z).
\]
Similarly,  
\begin{equation}\label{eq:uniform-zero-two-point}
\begin{aligned}
        \mathbb E\left[
        \sum_{j\ne I}g(z_I,z_j)
        \right]
        &=
        \frac1n
        \mathbb E\left[
        \sum_{i=1}^n\sum_{j\ne i}g(z_i,z_j)
        \right]  \\
        &=
        \frac1n
        \int_{\C}\int_{\C}
        g(z,w)\rho_2^{(n)}(z,w)\,d\ell(w)d\ell(z).
\end{aligned}
\end{equation}
In particular,
\begin{equation}\label{eq:KR-positive-mass}
        \mathbb P(z_I\in K_R)
        =\frac1n\int_{K_R}\rho^{(n)}_1(z)\,d\ell(z)=
        \int_{K_R}\frac{d\ell(z)}{\pi(1+|z|^2)^2}
        =:\mu_{\mathrm{FS}}(K_R). 
\end{equation}

 Note that the typical zero--zero spacing is of order \(n^{-1/2}\).  Moreover, the minimum zero--zero spacing among all
zeros is of order \(n^{-3/4}\), with a precise limiting distribution
\cite[Theorem~1]{FengYao}.   Motivated by this scale, we have the following
estimate for a uniformly chosen zero. 
\begin{lem} 
\label{lem:palm-spacing}
 For every \(\delta>0\),
\begin{equation}\label{eq:palm-spacing}
        \mathbb P\left(
        \min_{j\ne I}|z_I-z_j|\le n^{-3/4-\delta}
        \;\middle|\;
        z_I\in K_R
        \right)
        \to 0 .
\end{equation}
\end{lem}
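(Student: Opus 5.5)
We bound the probability by a first-moment (union bound) argument using the two-point correlation function. Set \(r_n:=n^{-3/4-\delta}\). On the event in \eqref{eq:palm-spacing}, at least one \(j\ne I\) has \(|z_I-z_j|\le r_n\). Applying Markov's inequality to the count of such \(j\) gives
\[
        \mathbb P\left(\min_{j\ne I}|z_I-z_j|\le r_n,\ z_I\in K_R\right)
        \le
        \mathbb E\left[\sum_{j\ne I}\ind\{z_I\in K_R\}\ind\{|z_I-z_j|\le r_n\}\right].
\]
We rewrite the right-hand side using \eqref{eq:uniform-zero-two-point} with \(g(z,w)=\ind_{K_R}(z)\ind\{|z-w|\le r_n\}\). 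This turns it into
\[
        \frac1n\int_{K_R}\int_{|w-z|\le r_n}\rho_2^{(n)}(z,w)\,d\ell(w)\,d\ell(z).
\]
The denominator \(\mathbb P(z_I\in K_R)=\mu_{\FS}(K_R)\) from \eqref{eq:KR-positive-mass} is a positive constant independent of \(n\). So it suffices to show that this double integral tends to zero.

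For large \(n\) we have \(r_n\le \log^2 n/\sqrt n\). Hence the short-range repulsion bound \eqref{localbound} applies uniformly for \(z\in K_R\) and \(|w-z|\le r_n\), giving \(\rho_2^{(n)}(z,w)\le C_R n^3|z-w|^2\). In polar coordinates around \(z\),
\[
        \int_{|w-z|\le r_n}|z-w|^2\,d\ell(w)=\frac{\pi}{2}r_n^4 .
\]
The whole expression is therefore bounded by
\[
        \frac1n\,\ell(K_R)\,C_R n^3\,\frac{\pi}{2}r_n^4
        = C'_R\, n^{2}\, n^{-3-4\delta}
        = C'_R\, n^{-1-4\delta},
\]
which tends to \(0\). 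This proves \eqref{eq:palm-spacing}.

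There is no real obstacle in this argument. The only point that needs care is checking that \eqref{localbound} is applicable, which holds because \(r_n=n^{-3/4-\delta}\) eventually lies inside the window \(|z-w|\le \log^2 n/\sqrt n\) and the bound is uniform over \(z\in K_R\). The computation also shows the bound has room to spare: already at \(r_n=n^{-3/4}\) the expectation is \(O(n^{-1})\), so no finer input is required. The global bound \eqref{globds} is not needed here.
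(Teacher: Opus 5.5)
Your proposal is correct and follows the paper's proof essentially verbatim. Both use the first-moment bound via \eqref{eq:uniform-zero-two-point} and the short-range repulsion estimate \eqref{localbound} to get $C_R n^{-1-4\delta}$, and then divide by $\mu_{\FS}(K_R)>0$. Your explicit check that $r_n\le \log^2 n/\sqrt n$ eventually, and your exact computation $\pi r_n^4/2$, are just slightly more detailed versions of the paper's steps.
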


\begin{proof}
Let \(r_n=n^{-3/4-\delta}\).  We first have the bound \[
\begin{aligned}
\mathbb P\left(
        z_I\in K_R,\,
        \min_{j\ne I}|z_I-z_j|\le r_n
        \right)  
 \le &
        \mathbb E\left[
        \mathbf 1_{\{z_I\in K_R\}}
        \sum_{j\ne I}\mathbf 1_{\{|z_I-z_j|\le r_n\}}
        \right]  \\
 =&
        \frac1n
        \int_{K_R}
        \int_{|w-z|\le r_n}
        \rho^{(n)}_2(z,w)\,d\ell(w)d\ell(z).
\end{aligned}
\]
 For \(n\) large enough, \eqref{localbound} gives 
\[
\begin{aligned}
        \frac1n
        \int_{K_R}
        \int_{|w-z|\le r_n}
        \rho^{(n)}_2(z,w)\,d\ell(w)d\ell(z)
        &\le
        \frac{C_R n^3}{n}
        \int_{K_R}
        \int_{|w-z|\le r_n}
        |z-w|^2\,d\ell(w)d\ell(z)  \\
        &\le
        C_R n^2 |K_R| r_n^4
        =
        C_R n^{-1-4\delta}.
\end{aligned}
\]
Dividing by \(\mathbb P(z_I\in K_R)=\mu_{\mathrm{FS}}(K_R)>0\) proves
\eqref{eq:palm-spacing}.
\end{proof}

For a sequence of random variables \(X_n\) and positive numbers \(a_n\), we
write \(X_n=O_{\mathbb P}(a_n)\) if, for every \(\varepsilon>0\), there exist
\(M>0\) and \(n_0\) such that
\[
        \mathbb P(|X_n|>Ma_n)<\varepsilon,
        \qquad n\ge n_0.
\] Similarly,   we write
\(
        X_n=o_{\mathbb P}(a_n)
\)
if, for every \(\eta>0\),
\[
        \mathbb P\bigl(|X_n|>\eta a_n\bigr)\to 0.
\]
We shall use the following elementary rules for \(O_{\mathbb P}\) and
\(o_{\mathbb P}\).  If
\(
        X_n=O_{\mathbb P}(a_n),
        Y_n=o_{\mathbb P}(b_n),
\)
then
\[
        X_nY_n=o_{\mathbb P}(a_nb_n).
\]
Moreover, if \(X_n=O_{\mathbb P}(a_n)\) and \(a_n/b_n\to0\), then
\[
        X_n=o_{\mathbb P}(b_n).
\]
When a conditioning event is specified, these statements are understood with
respect to the corresponding conditional probability measure.
 
\begin{lem}\label{lem:inv-square}
 Conditionally on \(z_I\in K_R\), \begin{equation}\label{eq:inv-square-bound}
        \sum_{j\ne I}\frac1{|z_I-z_j|^2}
        =
        O_{\mathbb P}(n\log n).
\end{equation}\end{lem}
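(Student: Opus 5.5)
The plan is to bound the expectation of the sum on the event \(\{z_I\in K_R\}\), after removing an event of vanishing conditional probability, and then conclude with Markov's inequality.

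First we exclude very close neighbours. Fix \(\delta>0\) small and set \(r_n=n^{-3/4-\delta}\). By Lemma~\ref{lem:palm-spacing}, the event \(E_n=\{\min_{j\ne I}|z_I-z_j|>r_n\}\) has conditional probability tending to one given \(z_I\in K_R\). It therefore suffices to show that
\[
\mathbb E\Bigl[\mathbf 1_{\{z_I\in K_R\}}\sum_{j\ne I}\frac{\mathbf 1_{\{|z_I-z_j|>r_n\}}}{|z_I-z_j|^2}\Bigr]=O(n\log n).
\]
Markov's inequality, together with division by \(\mu_{\FS}(K_R)>0\) from \eqref{eq:KR-positive-mass}, then gives \eqref{eq:inv-square-bound}.

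By \eqref{eq:uniform-zero-two-point}, this expectation equals
\[
\frac1n\int_{K_R}\int_{|w-z|>r_n}\frac{\rho_2^{(n)}(z,w)}{|z-w|^2}\,d\ell(w)\,d\ell(z).
\]
We split the \(w\)-integral into three ranges.

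\emph{Short range}, \(r_n<|w-z|\le s_n:=\log^2 n/\sqrt n\). Here \eqref{localbound} applies. The integrand is then at most \(C_Rn^3\), and the region has area at most \(\pi s_n^2\). This range contributes at most \(\frac1n\cdot C_R n^3 s_n^2|K_R|=O(n\log^4 n)\), which is too large by a polylog factor. So the short range needs a sharper treatment. We split it further at \(t_n:=1/\sqrt n\).
\begin{itemize}
\item For \(r_n<|w-z|\le t_n\), \eqref{localbound} gives a contribution of \(O(\frac1n n^3 t_n^2)=O(n)\). In fact the lower cutoff \(r_n\) is not needed here, because \(\rho_2\le C_Rn^3|z-w|^2\) cancels the singularity.
\item For \(t_n<|w-z|\le s_n\), we do not use \eqref{localbound}. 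We use the global bound \eqref{globds}, \(\rho_2\le Cn^2\). This gives
\[
\frac1n\int_{K_R}\int_{t_n<|w-z|\le s_n}\frac{Cn^2}{|z-w|^2}\,d\ell(w)\,d\ell(z)\le C n\,|K_R|\,2\pi\log(s_n/t_n)=O(n\log\log n).
\]
\end{itemize}

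\emph{Long range}, \(|w-z|>s_n\). Here we again use \eqref{globds}. Since \(z\) is bounded, we split \(w\) into \(|w|\le 2R+1\) and the complement. On the bounded part, the integral of \(|z-w|^{-2}\) over \(s_n<|w-z|\le 3R+1\) is \(O(\log(1/s_n))=O(\log n)\). This contributes \(O(\frac1n n^2\log n)=O(n\log n)\).

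On the far part, \(|z-w|^{-2}\) is bounded, but \(\rho_2\le Cn^2\) alone is not integrable over \(\mathbb C\). There we use instead that, for fixed \(z\), \(\int_{w}\rho_2^{(n)}(z,w)\,d\ell(w)\le (n-1)\rho_1^{(n)}(z)\), which is \(O(n^2)\) uniformly on \(K_R\). This holds because there are only \(n-1\) other zeros. Hence the far part contributes \(O(n)\).

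Summing the pieces gives \(O(n\log n)\).

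The main obstacle is the intermediate scale between \(n^{-1/2}\) and \(\log^2n/\sqrt n\). There, \eqref{localbound} loses powers of \(\log n\) and must be replaced by the global bound \eqref{globds}. The far-\(w\) tail also needs the Palm-count bound rather than \eqref{globds}, since \(\rho_2\le Cn^2\) is not integrable at infinity. The \(\log n\) in the statement comes precisely from integrating \(|z-w|^{-2}\) against \(\rho_2\approx n^2\) on scales between \(n^{-1/2}\) and \(1\).
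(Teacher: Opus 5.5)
Your proof is correct and is essentially the paper's argument. It uses \eqref{localbound} for $|w-z|\le n^{-1/2}$, the global bound \eqref{globds} on scales from $n^{-1/2}$ up to a fixed radius (the source of the $\log n$), the marginal identity $\int\rho_2^{(n)}(z,w)\,d\ell(w)=(n-1)\rho_1^{(n)}(z)$ for the far region, and then Markov's inequality. The preliminary truncation via Lemma~\ref{lem:palm-spacing} and the extra split at $\log^2 n/\sqrt n$ are harmless but unnecessary, as you partly note yourself.
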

As a related remark,  
\cite{Z}  studied
the Riesz energy of zeros on \(\CP^1\)
\[
        \mathcal E_s^n
        :=
        \sum_{i\ne j}
        \frac{1}{d_{\mathrm C}(z_i,z_j)^s},
\]
where \(d_{\mathrm C}\) is the chordal distance.   For \(s=2\),
\cite[Theorem~1.3]{Z} gives an asymptotic expansion, 
\[
        \E \mathcal E_2^n
        =
        \frac14 n^2\log n
        +
        \frac34 n^2\log\log n
        +
        O(n^2).
\]
 Other values of
\(s\) are also treated in \cite{Z}. The results in \cite{Z} were further generalized to higher-dimensional K\"ahler manifolds in \cite{FZ}.

Lemma~\ref{lem:inv-square} is a much rougher, but random, one-point analogue of
this energy estimate.  
Since the full two-point energy  $\mathcal E_2^n$ has order \(n^2\log n\), its average per zero
has order \(n\log n\), which is exactly the scale obtained in
Lemma~\ref{lem:inv-square}.  For our purposes this rough conditional estimate
is sufficient. For completeness, we include a brief proof.
\begin{proof}[Proof of Lemma \ref{lem:inv-square}]
  By
\eqref{eq:uniform-zero-two-point}, we have
\[
\begin{aligned}
&\mathbb E\left[
        \mathbf 1_{\{z_I\in K_R\}}
        \sum_{j\ne I}\frac1{|z_I-z_j|^2}
        \right]  =
        \frac1n
        \int_{K_R}
        \int_{\C}
        \frac1{|z-w|^2}\rho^{(n)}_2(z,w)\,d\ell(w)d\ell(z).
\end{aligned}
\]  We split the integral into three regions.  First, on \(|w-z|\le n^{-1/2}\), the short-range estimate \eqref{localbound} gives
\[
\begin{aligned}
    &   \int_{K_R}  \left[\int_{|w-z|\le n^{-1/2}}
        \frac1{|z-w|^2}\rho^{(n)}_2(z,w)\,d\ell(w)\right]d\ell(z)\\
        &\qquad \le 
        C_R n^3
      \int_{K_R}   \int_{|w-z|\le n^{-1/2}}d\ell(w)  d\ell(z)\le
        C_R n^2 .
\end{aligned}
\]
 Second, on \(n^{-1/2}<|w-z|\le c_R\) for some $c_R>0$ fixed, the global upper bound \eqref{globds}
 gives
\[
\begin{aligned}
      &   \int_{K_R}\left[ \int_{n^{-1/2}<|w-z|\le  c_R}
        \frac1{|z-w|^2}\rho^{(n)}_2(z,w)\,d\ell(w)\right]d\ell(z)\\
        &\qquad \le
        C_R n^2
        \int_{n^{-1/2}}^{c_R}\frac{dr}{r}  \le
        C_R n^2\log n.
\end{aligned}
\]
Third, on \(|w-z|> c_R\), the denominator is bounded below. 
The marginal
identity for the 2-point function is \[
        \int_{\C}\rho^{(n)}_2(z,w)\,d\ell(w)=(n-1)\rho^{(n)}_1(z).
\]
Therefore, 
\[
\begin{aligned}
&\int_{K_R}
        \int_{|w-z|>c_R}
        \frac{1}{|z-w|^2}
        \rho_2^{(n)}(z,w)\,d\ell(w)\,d\ell(z)  \\
&\qquad \le
        C_R
        \int_{K_R}
        \int_{|w-z|>c_R}
        \rho_2^{(n)}(z,w)\,d\ell(w)\,d\ell(z)  \\
&\qquad \le
        C_R
        \int_{K_R}
        (n-1)\rho_1^{(n)}(z)\,d\ell(z) \le
        C_R n^2 .
\end{aligned}
\]
Combining the three estimates and multiplying by the prefactor $1/n$, we obtain
\[
        \mathbb E\left[
        \mathbf 1_{\{z_I\in K_R\}}
        \sum_{j\ne I}\frac1{|z_I-z_j|^2}
        \right]
        \le
        C_R n\log n. 
\] 
Dividing by
\(
        \mathbb P(z_I\in K_R)=\mu_{\mathrm{FS}}(K_R)>0
\)
gives
\[
        \mathbb E\left[
        \sum_{j\ne I}\frac1{|z_I-z_j|^2}
        \;\Big |\;
        z_I\in K_R
        \right]
        \le
        C_R n\log n.
\]
 Given
\(\varepsilon>0\), choose \(M=C_R/\varepsilon\).  By Markov's inequality, 
\[
\begin{aligned}
&\mathbb P\left(
        \sum_{j\ne I}{|z_I-z_j|^{-2}}
        >
        M n\log n
        \;\Big |\;
        z_I\in K_R
        \right)  \\
&\qquad \le
        \frac{
        \mathbb E\left[
        \sum_{j\ne I}|z_I-z_j|^{-2}
        \mid z_I\in K_R
        \right]
        }{M n\log n}
        \le
        \frac{C_R}{M}
        =
        \varepsilon .
\end{aligned}
\]
This proves \eqref{eq:inv-square-bound}.
\end{proof}

\section{Zero--critical point spacings}
\label{sec:fixed-bulk}

We now prove Lemma \ref{lem:bulk-KR}.   The input is the following fixed-point conditional theorem.  For \(\xi\in K_R\) and $\varepsilon\in(0,1/2)$, define the event
\[
        \mathcal H_{n,\varepsilon}(\xi)
        :=
        \Big\{
        \#\bigl(\mathcal C_n\cap B(\xi,n^{-1+\varepsilon})\bigr)=1,\,
        \#\bigl(\mathcal C_n\cap B(\xi,n^{-1-\varepsilon})\bigr)=0
        \Big\}.
\]

\begin{prop}[{\cite[Theorem 1]{hanin2}}]
\label{prop:Hanin-fixed}
Let \(\xi\in K_R\) be fixed.  For every \(\varepsilon\in(0,1/2)\),
\[\label{eq:Hanin-fixed}
        \mathbb P\left(
        \mathcal H_{n,\varepsilon}(\xi)
        \,\middle|\,
        p_n(\xi)=0
        \right)
        \to 1 .\]
More precisely, there exists a constant \(C=C(\xi,\varepsilon)>0\) such that
\[\label{eq:Hanin-fixed-rate}
        \mathbb P\left(
        \mathcal H_{n,\varepsilon}(\xi)^c
        \,\middle|\,
        p_n(\xi)=0
        \right)
        \le
        C(\xi,\varepsilon)n^{-3/2+3\varepsilon}.
\]
\end{prop}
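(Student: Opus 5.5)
The plan is to reduce the statement to a Rouché argument for $p_n'$ on the two circles $|z-\xi|=n^{-1\pm\varepsilon}$, carried out under the Gaussian conditional law $\mathbb P(\cdot\mid p_n(\xi)=0)$. The bad events should be controlled by small-ball and large-deviation bounds for finitely many jointly Gaussian Taylor coefficients.

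\emph{Step 1: factor out the mean drift.} Before expanding, I would remove the large deterministic drift visible in the regression \eqref{eq:C-regression}, where $m_n(z)\approx 2n\bar z/q$. Write
\[
p_n(z)=(1+\bar\xi z)^n g_n(z).
\]
By \eqref{eq:kernel}, $g_n$ has covariance kernel $(1+z\bar w)^n(1+\bar\xi z)^{-n}(1+\xi\bar w)^{-n}$. Up to the factor $q^{n}$, with $q=1+|\xi|^2$, this is a Gaussian analytic function on the natural scale $n^{-1/2}$ around $\xi$. Conditioned on $g_n(\xi)=0$, write
\[
g_n(\xi+w)=g_1w+g_2w^2+w^3R(w).
\]
Here $(g_1,g_2)$ is a non-degenerate complex Gaussian vector with $|g_k|\asymp n^{k/2}$ after dividing by $q^{n/2}$, uniformly for $\xi\in K_R$. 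Cauchy estimates on a disc of radius $c\,n^{-1/2}$, combined with a Borell--TIS or union bound on a net, should give $\sup_{|w|\le n^{-1+\varepsilon}}|R(w)|\le n^{3/2+\varepsilon}$ off an event of probability $e^{-cn^{2\varepsilon}}$.

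\emph{Step 2: Taylor expansion of the critical point equation.} The critical point equation $p_n'=0$ becomes $G(w)=0$, where
\[
G(w):=g_n'(\xi+w)+n h(w)\,g_n(\xi+w),\qquad h(w)=\frac{\bar\xi}{1+\bar\xi(\xi+w)}=\frac{\bar\xi}{q}+O(|w|).
\]
Expanding gives
\[
G(w)=g_1\Bigl(1+n\tfrac{\bar\xi}{q}w\Bigr)+E(w),\qquad
|E(w)|\lesssim |g_2||w|+n|g_2||w|^2+n|g_1||w|^2+n^{3/2+\varepsilon}|w|^2 .
\]
The linear part has its unique root at $w_*=-q/(n\bar\xi)$. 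Since $|\bar\xi|/q$ is bounded above and below on $K_R$, $|w_*|\asymp n^{-1}$, so $w_*$ lies strictly inside the annulus $n^{-1-\varepsilon}<|w|<n^{-1+\varepsilon}$.

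\emph{Step 3: Rouché on the two circles.} On $|w|=n^{-1+\varepsilon}$ the linear part has size $\asymp |g_1|n^{\varepsilon}$. On $|w|=n^{-1-\varepsilon}$ it has size $\asymp|g_1|$. In both cases Rouché's theorem applies as soon as
\[
|g_1|\gg n^{\varepsilon}\,|g_2|\,n^{-1+\varepsilon}+n^{1/2+2\varepsilon}\,n^{-1+\varepsilon}\cdot(\text{sup bound}).
\]
Given the typical sizes $|g_2|\lesssim n^{1+\varepsilon/2}$ (Gaussian tail) and the bound on $R$, this reduces to a lower bound of the form $|g_1|\ge n^{1/2}\cdot n^{-3/4+3\varepsilon/2}$, up to constants. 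On that event, $G$ has exactly one zero in the large disc and none in the small disc, which is precisely the event $\mathcal H_{n,\varepsilon}(\xi)$.

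\emph{Step 4: the probability bound.} Since $g_1/\sqrt{\operatorname{Var}g_1}$ is a standard complex Gaussian, $\mathbb P(|g_1|\le t\sqrt{\operatorname{Var}g_1})\le t^2$. With $t=n^{-3/4+3\varepsilon/2}$ this gives the rate $n^{-3/2+3\varepsilon}$. The remaining bad events are stretched-exponentially small. All constants are uniform in $\xi\in K_R$, which also yields the stated constant $C(\xi,\varepsilon)$.

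The main obstacle is the bookkeeping in Step 3: choosing the thresholds for $g_1$, $g_2$ and $R$ so that every error term is dominated on both circles while the bad probability stays at order $n^{-3/2+3\varepsilon}$. The exact exponent depends on how sharply the term $|g_2||w|$ is compared with the linear part on the small circle. If that comparison forces a worse exponent, I would first try conditioning on $g_1$ and using the conditional law of $g_2$ given $g_1$, rather than separate crude tail bounds, to recover the rate. For the qualitative statement $\mathbb P(\mathcal H_{n,\varepsilon}(\xi)\mid p_n(\xi)=0)\to1$, any polynomially small threshold already suffices.
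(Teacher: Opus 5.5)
\textbf{Gap in Steps 3--4.} The paper gives no proof of this proposition. It quotes Hanin, and later (Corollary~\ref{cor:Hanin-random-zero}) it uses only the qualitative limit. Your reduction (factor out $(1+\bar\xi z)^n$, then apply Rouch\'e to $G$ on the two circles) is a sound route to that limit. Steps~3--4 do not give the rate, however, and the difficulty is not where you put it.

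The binding comparison is on the outer circle. On $|w|=n^{-1+\varepsilon}$:
\begin{itemize}
\item the $g_1$-part $g_1\bigl(1+nh(w)w\bigr)$ has size $\asymp|g_1|n^{\varepsilon}$;
\item the $g_2$-part $g_2w\bigl(2+nh(w)w\bigr)$ has size $\asymp|g_2|n^{-1+2\varepsilon}$.
\end{itemize}
So Rouch\'e needs $|g_1|\gtrsim|g_2|n^{-1+\varepsilon}$. With $|g_1|\asymp n^{1/2}$ and $|g_2|\asymp n$, this means $|g_1|/\sqrt{\mathrm{Var}\,g_1}\gtrsim n^{-1/2+\varepsilon}$. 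Your threshold $n^{-3/4+3\varepsilon/2}$ is below this for every $\varepsilon<1/2$. It was read off from the target exponent rather than derived from the error terms. On the inner circle you only need $|g_1|\gtrsim|g_2|n^{-1-\varepsilon}$, which costs $n^{-1-2\varepsilon}$.

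\textbf{The failure is genuine.} Conditioning on $g_2$ given $g_1$ cannot repair this. Take the event $\{|g_1|\le\delta|g_2|n^{-1+\varepsilon}\}$ with $\delta$ small and $g_2,R$ typical. There the $g_2$-part dominates on the outer circle and has two zeros inside it, at $w=0$ and $w\approx-2q/(n\bar\xi)$. Hence $G$ has two zeros in $B(\xi,n^{-1+\varepsilon})$ and $\mathcal H_{n,\varepsilon}(\xi)$ fails. Concretely, $p_n$ then has a second zero at distance $\approx|g_1/g_2|$ from $\xi$, and that zero carries its own paired critical point.

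Now $g_1/\sqrt{\mathrm{Var}\,g_1}$ is standard complex Gaussian, and $g_2+\tfrac{\bar\xi}{q}g_1$ is independent of $g_1$ with size $\asymp n$ in your normalization. So this event has probability $\asymp n^{-1+2\varepsilon}\gg n^{-3/2+3\varepsilon}$ under $\mathbb P(\cdot\mid p_n(\xi)=0)$. The rate display is therefore not provable for the untilted conditional law, by any method. Under the Palm measure $\mathbb Q_\xi$, which tilts by $|p_n'(\xi)|^2\propto|g_1|^2$, the small-ball cost becomes $t^4$ instead of $t^2$, giving order $n^{-2+4\varepsilon}$, which is below the quoted rate.

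\textbf{What your argument does prove.} Take thresholds
\[
|g_1|/\sqrt{\mathrm{Var}\,g_1}\ge n^{-1/2+\varepsilon+\eta},\qquad |g_2|\le n^{1+\eta},\qquad \sup|R|\le n^{3/2+\eta},
\]
with a small $\eta>0$. Then $\mathbb P(\mathcal H_{n,\varepsilon}(\xi)^c\mid p_n(\xi)=0)=O(n^{-1+2\varepsilon+2\eta})$, which gives the qualitative limit, the only part the paper needs.

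Do not use your bound $n^{3/2+\varepsilon}$ for $R$. With it, $n|w|^3\sup|R|\lesssim n^{-1/2+4\varepsilon}$ is not dominated by the $g_1$-part when $\varepsilon$ is near $1/2$.

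A minor slip: $|g_k|\asymp n^{k/2}$ holds after multiplying by $q^{n/2}$, not dividing.
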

 Note that Proposition~\ref{prop:Hanin-fixed} is a special case of
\cite[Theorem~1]{hanin2}, which is proved for Gaussian random holomorphic sections
over Riemann surfaces.  We state only the form needed here for Gaussian
SU(2) polynomials.

 In the proof of Lemma~ \ref{lem:bulk-KR}, the zero is not fixed in advance; it is
chosen uniformly from the random zero set.   We therefore need to transfer the fixed-point
conditional statement Proposition \ref{prop:Hanin-fixed} to the law seen from a randomly selected zero.  

\begin{cor} 
\label{cor:Hanin-random-zero}
 For every \(\varepsilon\in(0,1/2)\), \[\label{eq:Hanin-random-zero}
\mathbb P\Bigl(  \mathcal H_{n,\varepsilon}(z_I)
        \,\mid\, z_I\in K_R
        \Bigr)
        \to 1.
\]
\end{cor}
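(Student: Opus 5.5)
We transfer Proposition~\ref{prop:Hanin-fixed} from a fixed point to a uniformly chosen zero, using the Kac--Rice (Campbell--Palm) machinery from the proof of Lemma~\ref{lem:palm-field}. Let $\Psi_n(z):=\ind\{\mathcal H_{n,\varepsilon}(z)^c\}$. This is a functional of the whole polynomial $p_n$, not only of the jet $(p_n(z),p_n'(z),p_n''(z))$. So we first extend \eqref{campp} to general bounded measurable functionals $F(z,p_n)$. The Kac--Rice formula holds in this generality:
\[
\mathbb E\Bigl[\sum_{\zeta\in\Zn}\psi(\zeta)F(\zeta,p_n)\Bigr]=\int_{\C}\psi(z)\,\mathbb E\bigl[|p_n'(z)|^2F(z,p_n)\mid p_n(z)=0\bigr]\phi_A(0)\,d\ell(z).
\]
Since the Gaussian field is smooth and nondegenerate, the same proof via \cite[Theorem 11.2.1]{AT} applies, or one can approximate $F$ by cylinder functionals. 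Arguing exactly as at the end of the proof of Lemma~\ref{lem:palm-field}, we obtain
\[
\mathbb P\bigl(\mathcal H_{n,\varepsilon}(z_I)^c\mid z_I\in K_R\bigr)\le\sup_{z\in K_R}\mathbb E_{\mathbb Q_z}[\Psi_n(z)],
\]
where $\mathbb Q_z$ is the $|B|^2$-tilt of $\Prob(\cdot\mid A=0)$.

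The tilt is removed by Cauchy--Schwarz. With $\sigma_n^2:=\mathbb E[|B|^2\mid A=0]=nq^{n-2}$,
\[
\mathbb E_{\mathbb Q_z}[\Psi_n]=\frac{\mathbb E[|B|^2\Psi_n\mid A=0]}{\sigma_n^2}\le\frac{\mathbb E[|B|^4\mid A=0]^{1/2}}{\sigma_n^2}\,\Prob\bigl(\mathcal H_{n,\varepsilon}(z)^c\mid p_n(z)=0\bigr)^{1/2}.
\]
Conditionally on $A=0$, the variable $B$ is centered complex Gaussian, so $\mathbb E[|B|^4\mid A=0]=2\sigma_n^4$. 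The prefactor is therefore $\sqrt2$, and
\[
\mathbb E_{\mathbb Q_z}[\Psi_n]\le\sqrt2\,C(z,\varepsilon)^{1/2}n^{-3/4+3\varepsilon/2}.
\]
The exponent is negative because $\varepsilon<1/2$.

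The main obstacle is uniformity: Proposition~\ref{prop:Hanin-fixed} is stated only for fixed $\xi$, with constant $C(\xi,\varepsilon)$. We handle this in two ways.

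\begin{itemize}[leftmargin=*]
\item \textbf{Rotational symmetry.} The law of $p_n$ is invariant under $z\mapsto e^{i\theta}z$, since $a_k e^{ik\theta}$ has the same law as $a_k$. The events $\mathcal H_{n,\varepsilon}$ use Euclidean balls and $\mathcal C_n$, both of which are covariant under rotation. Hence $\Prob(\mathcal H_{n,\varepsilon}(z)^c\mid p_n(z)=0)$ depends only on $|z|\in[R^{-1},R]$.
\item \textbf{Uniformity in $|z|$.} For the remaining one-parameter family, we would either observe that Hanin's constant depends continuously on the local kernel data (the covariances computed in Lemma~\ref{lem:palm-field}, which are smooth in $z$ and uniformly nondegenerate on $K_R$), so that $\sup_{\xi\in K_R}C(\xi,\varepsilon)<\infty$, or avoid uniformity altogether.
\end{itemize}

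To avoid uniformity, we use that $z\mapsto\Prob(\mathcal H_{n,\varepsilon}(z)^c\mid p_n(z)=0)\to0$ pointwise and is bounded by $1$. Apply dominated convergence to the averaged integral
\[
\int_{K_R}\mathbb E_{\mathbb Q_z}[\Psi_n]\,\rho_1^{(n)}\,d\ell\big/(n\mu_{\FS}(K_R)),
\]
instead of bounding it by the supremum. This works because $\rho_1^{(n)}/n$ is the fixed density of $\mu_{\FS}$, and the Cauchy--Schwarz bound above is pointwise in $z$ with an $n$-independent prefactor. We would adopt this dominated-convergence route as the primary argument, since it needs only the pointwise statement of Proposition~\ref{prop:Hanin-fixed}.
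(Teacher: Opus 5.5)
Your proposal is correct and follows essentially the paper's proof: a Campbell--Palm identity for the indicator of $\mathcal H_{n,\varepsilon}$, removal of the $|B|^2$-tilt by Cauchy--Schwarz with $\mathbb E[|B|^4\mid A=0]=2\,\mathbb E[|B|^2\mid A=0]^2$ giving the factor $\sqrt2$, and dominated convergence over $\xi\in K_R$ against $\mu_{\FS}$, which uses only the pointwise statement of Proposition~\ref{prop:Hanin-fixed}. The supremum bound and the rotational-symmetry and uniformity discussion are unnecessary once you adopt the dominated-convergence route, which is the route the paper takes.
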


\begin{proof}
For $\xi\in K_R$, write
\(
        A=p_n(\xi), B=p_n'(\xi) 
\). 
Recall that the Palm measure \(\mathbb Q_\xi\) of the zero process at
\(\xi\) is given by
\[
        d\mathbb Q_\xi
        =
        \frac{|B|^2}{\mathbb E[|B|^2\mid A=0]}
        \,d\mathbb P(\cdot\mid A=0).
\]
 The event \(\mathcal H_{n,\varepsilon}(\xi)\) is determined by the critical
point configuration of \(p_n\), 
\begin{align*}
\mathbf 1_{\mathcal H_{n,\varepsilon}(\xi)}& =
        \sum_{\beta\in\mathcal C_n}
        \mathbf 1_{\{\beta\in A_\xi\}}
        \prod_{\substack{u\in\mathcal C_n\\ u\ne\beta}}
        \left(1-\mathbf 1_{\{u\in U_\xi\}}\right),
\end{align*}
where \(
        U_\xi:=B(\xi,n^{-1+\varepsilon}), \,\, V_\xi:=B(\xi,n^{-1-\varepsilon}),\,\,
        A_\xi:=U_\xi\setminus V_\xi.
\)  Applying the Kac--Rice formula to the
corresponding indicator, and rewriting the result using the definition of
\(\mathbb Q_z\) as in the derivation of \eqref{campp}, gives the Campbell--Palm identity
\[
        \mathbb E\left[
        \sum_{z_i\in\mathcal Z_n}
        \mathbf 1_{\mathcal H_{n,\varepsilon}(z_i)}
        \right]
        =
        \int_{\mathbb C}
        \rho_1^{(n)}(z)\,
        \mathbb Q_z\bigl(\mathcal H_{n,\varepsilon}(z)\bigr)
        \,d\ell(z).
\] Here the Campbell--Palm identity is simply the Kac--Rice formula written under
the \(|p_n'(z)|^2\)-tilted conditional law \(p_n(z)=0\).

Now we show that, for each fixed \(\xi\in K_R\),
\begin{equation}\label{eq:pointwise-Palm-Hanin}
        \mathbb Q_\xi\bigl(\mathcal H_{n,\varepsilon}(\xi)^c\bigr)
        \to 0 .
\end{equation}
Indeed,  under \(\mathbb P(\cdot\mid A=0)\), the random variable \(B\) is a non-degenerate
centered complex Gaussian. Hence, by circular symmetry, 
\[
        \mathbb E[|B|^4\mid A=0]
        =2\mathbb E[|B|^2\mid A=0]^2 .
\]
By the Cauchy--Schwarz inequality,
\begin{align*}
        \mathbb Q_\xi\bigl(\mathcal H_{n,\varepsilon}(\xi)^c\bigr)
        &=
        \frac{
        \mathbb E\bigl[|B|^2\mathbf 1_{\mathcal H_{n,\varepsilon}(\xi)^c}
        \mid A=0\bigr]
        }{
        \mathbb E[|B|^2\mid A=0]
        }                                                       \notag\\
        &\le
        \left(
        \frac{\mathbb E[|B|^4\mid A=0]}
             {\mathbb E[|B|^2\mid A=0]^2}
        \right)^{1/2}
        \mathbb P\bigl(\mathcal H_{n,\varepsilon}(\xi)^c\mid A=0\bigr)^{1/2}
                                                                        \notag\\
        &\le
        \sqrt 2\, \mathbb P\bigl(\mathcal H_{n,\varepsilon}(\xi)^c\mid A=0\bigr)^{1/2}. 
        \label{eq:tilted-Hanin-bad}
\end{align*}
By Proposition~\ref{prop:Hanin-fixed}, the last quantity tends to \(0\) for
each fixed \(\xi\in K_R\).  This proves \eqref{eq:pointwise-Palm-Hanin}.

It remains to average over the location of the selected zero. Since \(I\) is chosen uniformly from \(\{1,\dots,n\}\), independently of \(p_n\), we have 
\begin{align*}
\mathbb P\bigl(\mathcal H_{n,\varepsilon}(z_I)^c\mid z_I\in K_R\bigr) &=
        \frac{
        \mathbb E\left[
        \sum_{z_i\in\mathcal Z_n\cap K_R}
        \mathbf 1_{\mathcal H_{n,\varepsilon}(z_i)^c}
        \right]
        }{
        \mathbb E\left[
        \#(\mathcal Z_n\cap K_R)
        \right]
        } .                                                       
\end{align*}
Applying the Campbell--Palm identity, we obtain \begin{align*}
\mathbb P\bigl(
        \mathcal H_{n,\varepsilon}(z_I)^c
        \,\mid\,
        z_I\in K_R
        \bigr) =& \frac{
        \int_{K_R}
        \mathbb Q_\xi\bigl(
        \mathcal H_{n,\varepsilon}(\xi)^c
        \bigr)
        \rho_1^{(n)}(\xi)\,d\ell(\xi)
        }{
        \int_{K_R}
        \rho_1^{(n)}(\xi)\,d\ell(\xi)
        }\\=&    \frac{
        \int_{K_R}
        \mathbb Q_\xi\bigl(
        \mathcal H_{n,\varepsilon}(\xi)^c
        \bigr)
        d\mu_{\mathrm{FS}}(\xi)
        }{
        \mu_{\mathrm{FS}}(K_R)
        } .\end{align*} For each fixed \(\xi\in K_R\), the integrand  $\mathbb Q_\xi\bigl(
        \mathcal H_{n,\varepsilon}(\xi)^c
        \bigr)$ tends to \(0\) by
\eqref{eq:pointwise-Palm-Hanin}, and it is bounded by \(1\).  Therefore, by the
dominated convergence theorem,
\[
        \int_{K_R}
        \mathbb Q_\xi\bigl(
        \mathcal H_{n,\varepsilon}(\xi)^c
        \bigr)
        d\mu_{\mathrm{FS}}(\xi)
        \to 0 .
\]
Since \(\mu_{\mathrm{FS}}(K_R)>0\),  it follows that
 \[
        \mathbb P\bigl(
        \mathcal H_{n,\varepsilon}(z_I)^c
        \,\mid \,
        z_I\in K_R
        \bigr)
        \to0.
\]
This proves the corollary.
\end{proof}

\begin{proof}[Proof of Lemma~\ref{lem:bulk-KR}]
Fix \(R>1\).  Throughout this proof we work under the conditional probability
measure
\[
        \mathbb P_R(\,\cdot\,)
        :=
        \mathbb P(\,\cdot\,\mid z_I\in K_R).
\]
All \(O_{\mathbb P}\) and \(o_{\mathbb P}\) statements below are understood with
respect to \(\mathbb P_R\).

Choose \(\varepsilon,\delta>0\) such that
\(
        \varepsilon+\delta<\frac14 .
\)
Let
\[a_{Ij}:=z_I-z_j,
        \qquad
        S_I:=\sum_{j\ne I}\frac1{a_{Ij}},
\]
and define the spacing event
\[
        E_n
        :=
        \left\{
        \min_{j\ne I}|a_{Ij}|\ge n^{-3/4-\delta}
        \right\}.
\]
Set
\[
        G_n
        :=
        \mathcal H_{n,\varepsilon}(z_I)\cap E_n .
\]
By Corollary~\ref{cor:Hanin-random-zero} and Lemma~\ref{lem:palm-spacing},
\[
        \mathbb P_R(G_n)\to1 .
\]
We shall carry out the estimates on \(G_n\).  Since
\(\mathbb P_R(G_n^c)\to0\), estimates proved on \(G_n\) imply the corresponding
\(O_{\mathbb P}\) and \(o_{\mathbb P}\) statements under \(\mathbb P_R\). For
example, to prove that \(X_n=o_{\mathbb P}(a_n)\) under \(\mathbb P_R\), it is
enough to show that, for every \(\eta>0\),
\[
        \mathbb P_R\bigl(|X_n|>\eta a_n,\;G_n\bigr)\to0.
\]
Indeed,
\[
        \mathbb P_R(|X_n|>\eta a_n)
        \le
        \mathbb P_R\bigl(|X_n|>\eta a_n,\;G_n\bigr)
        +
        \mathbb P_R(G_n^c),
\]
and both terms tend to zero.

On \(G_n\), let \[\beta_I=z_I+w_I\] be the unique critical point in
\(B(z_I,n^{-1+\varepsilon})\).  Then, by the definition of
\(\mathcal H_{n,\varepsilon}(z_I)\),
\begin{equation}\label{eq:Hanin-annulus-point}
        n^{-1-\varepsilon}<|w_I|<n^{-1+\varepsilon}.
\end{equation}
 At the critical point \(\beta_I=z_I+w_I\),  by \eqref{criteq}, we have 
\[\label{eq:crit-eq-main}
        0=
        \frac{p_n'(z_I+w_I)}{p_n(z_I+w_I)}
        =
        \frac1{w_I}
        +
        \sum_{j\ne I}\frac1{a_{Ij}+w_I}.
\]
Equivalently,
\begin{equation}\label{eq:crit-eq-expanded}
        0=\frac1{w_I}+S_I+\Delta_I,
\end{equation}
where
\[\label{eq:Delta-def}
        \Delta_I:=
        \sum_{j\ne I}
        \left(
        \frac1{a_{Ij}+w_I}-\frac1{a_{Ij}}
        \right).
\]
On \(G_n\), using \eqref{eq:Hanin-annulus-point} and the definition of \(E_n\),
we have
\begin{equation}\label{ratio}
        \max_{j\ne I}\left|\frac{w_I}{a_{Ij}}\right|
        \le
        n^{-1+\varepsilon}n^{3/4+\delta}
        =
        n^{-1/4+\varepsilon+\delta}
        =
        o(1).
\end{equation}
Hence, for all sufficiently large \(n\), on \(G_n\),
\[
        |a_{Ij}+w_I|
        \ge
        |a_{Ij}|\left(1-\frac{|w_I|}{|a_{Ij}|}\right)
        \ge
        \frac12|a_{Ij}|,
        \qquad j\ne I.
\]
Therefore, on \(G_n\),
\begin{align*}
        |\Delta_I|
        &\le
        \sum_{j\ne I}
        \frac{|w_I|}{|a_{Ij}+w_I|\,|a_{Ij}|}
        \le
        2|w_I|
        \sum_{j\ne I}\frac1{|a_{Ij}|^2}.
        \label{eq:Delta-basic-bound}
\end{align*}
Using \eqref{eq:Hanin-annulus-point} and Lemma~\ref{lem:inv-square}, which is
understood under \(\mathbb P_R\), we obtain
\begin{equation}\label{eq:Delta-opn}
        |\Delta_I|
        \le
        n^{-1+\varepsilon}O_{\mathbb P}(n\log n)
        =
        O_{\mathbb P}(n^\varepsilon\log n)
        =
        o_{\mathbb P}(n).
\end{equation}
On the other hand, Lemma~\ref{lem:palm-field}, again under the conditional law
\(\mathbb P_R\), gives
\begin{equation}\label{eq:S-asymptotic}
        S_I=nV_{\mathrm{FS}}(z_I)+o_{\mathbb P}(n).
\end{equation}
Since
       \[V_{\mathrm{FS}}(z)=\bar z/(1+|z|^2)\]
is bounded away from zero on \(K_R\), equations
\eqref{eq:Delta-opn} and \eqref{eq:S-asymptotic} imply
\[
        S_I+\Delta_I
        =
        nV_{\mathrm{FS}}(z_I)+o_{\mathbb P}(n)=  nV_{\mathrm{FS}}(z_I)\left(1+o_{\mathbb P}(1)\right).
\]
 From \eqref{eq:crit-eq-expanded}, we obtain
\beq\label{wid}
        w_I=-\frac1{S_I+\Delta_I}
        =
        -\frac1{nV_{\mathrm{FS}}(z_I)}+o_{\mathbb P}(n^{-1}),
\eeq
and thus 
\[\label{eq:bulk-displacement-proof}
        n(\beta_I-z_I)=nw_I
        =
        -\frac{1+|z_I|^2}{\overline{z_I}}
        +
        o_{\mathbb P}(1).
\]
This proves \eqref{eq:bulk-displacement}.

Finally, using the local Fubini--Study distance expansion (recall \eqref{disdis})
\[\label{eq:fs-local-distance-in-bulk}
        d_{\FS}(z,z+w)
        =
        \frac{2|w|}{1+|z|^2}+O_R(|w|^2),
        \qquad z\in K_R,
        \quad |w|\to0.
\]  Therefore, 
\begin{align*}
        nD_n(z_I)
        &=
        n\,d_{\FS}(z_I,z_I+w_I)                                      \\
        &=
        \frac{2n|w_I|}{1+|z_I|^2}+o_{\mathbb P}(1)                    \\
        &=
        \frac2{|z_I|}+o_{\mathbb P}(1).
\end{align*}
This proves \eqref{eq:bulk-distance}.
\end{proof}

\section{Proof of Theorem \ref{main}}\label{mainproof}

\begin{proof}
Fix \(f\in C_b([0,\infty))\).  Define
\[
        A_n(f)
        :=
        \frac1n\sum_{i=1}^{n}f\bigl(nD_n(z_i)\bigr).
\]
Let
\[
        g(z):=f\!\left(\frac2{|z|}\right),
\]
with the value at \(z=0\) chosen arbitrarily.  We also set \(g(\infty)=f(0)\).
Then \(g\) is a bounded measurable function on \(\mathbb{CP}^1\), and its set
of discontinuities is contained in \(\{0\}\).  Since
\(\mu_{\mathrm{FS}}(\{0\})=0\), \(g\) is \(\mu_{\mathrm{FS}}\)-almost everywhere
continuous.  Moreover,
\[
        \int g\,d\mu_{\mathrm{FS}}
        =
        \mathbb E[f(Y)].
\]
Set (recall \eqref{mnns} for the definition of $\mu_n$)
\[
        B_n(f)
        :=
        \frac1n\sum_{i=1}^{n}g(z_i)
        =
        \int g\,d\mu_n .
\]
Then
\[
        A_n(f)-\mathbb E[f(Y)]
        =
        \bigl(A_n(f)-B_n(f)\bigr)
        +
        \left(
        B_n(f)-\int g\,d\mu_{\mathrm{FS}}
        \right).
\]

We first prove that \(A_n(f)-B_n(f)\to0\) in probability.  Recall that \(I\) is
uniformly distributed on \(\{1,\dots,n\}\), independently of the polynomial.
Then
\[
\begin{aligned}
        \mathbb E|A_n(f)-B_n(f)|
        &\le
        \mathbb E\left[
        \frac1n\sum_{i=1}^{n}
        \left|
        f\bigl(nD_n(z_i)\bigr)
        -
        f\!\left(\frac2{|z_i|}\right)
        \right|
        \right]  \\
        &=
        \mathbb E\left[
        \left|
        f\bigl(nD_n(z_I)\bigr)
        -
        f\!\left(\frac2{|z_I|}\right)
        \right|
        \right].
\end{aligned}
\]
We claim that the last expectation tends to zero.

Fix $R>1$. By Lemma~\ref{lem:bulk-KR}, under the conditional law given
\(z_I\in K_R\),
\[
        nD_n(z_I)-\frac2{|z_I|}
        \to0
        \qquad\text{in probability}.
\]
Since \(f\) is continuous, it follows that, under the same conditional law,
\[
        f\bigl(nD_n(z_I)\bigr)
        -
        f\!\left(\frac2{|z_I|}\right)
        \to0
        \qquad\text{in probability}.
\]
Moreover,
\[
        \left|
        f\bigl(nD_n(z_I)\bigr)
        -
        f\!\left(\frac2{|z_I|}\right)
        \right|
        \le
        2\|f\|_{\infty}.
\]
Hence, by bounded convergence in probability, the above conditional convergence in probability implies conditional
\(L^1\)-convergence:
\begin{equation}\label{eq:conditional-L1-bulk}
        \mathbb E\left[
        \left|
        f\bigl(nD_n(z_I)\bigr)
        -
        f\!\left(\frac2{|z_I|}\right)
        \right|
        \,\middle|\,
        z_I\in K_R
        \right]
        \to0.
\end{equation}
Therefore,
\[
\begin{aligned}
&\mathbb E\left[
        \left|
        f\bigl(nD_n(z_I)\bigr)
        -
        f\!\left(\frac2{|z_I|}\right)
        \right|
        \right]  \\
&\quad \le
        \mathbb P(z_I\in K_R)
        \mathbb E\left[
        \left|
        f\bigl(nD_n(z_I)\bigr)
        -
        f\!\left(\frac2{|z_I|}\right)
        \right|
        \,\middle|\,
        z_I\in K_R
        \right]  \\
&\qquad
        +
        2\|f\|_{\infty}\,
        \mathbb P(z_I\notin K_R).
\end{aligned}
\]
The first term tends to zero for every fixed \(R>1\), by
\eqref{eq:conditional-L1-bulk}.  On the other hand, by the exact 1-point
intensity formula, recall \eqref{eq:KR-positive-mass},
\[
        \mathbb P(z_I\notin K_R)
        =
        \mu_{\mathrm{FS}}(K_R^c).
\]
Consequently,
\[
\limsup_{n\to\infty}
\mathbb E\left[
        \left|
        f\bigl(nD_n(z_I)\bigr)
        -
        f\!\left(\frac2{|z_I|}\right)
        \right|
        \right]
\le
        2\|f\|_{\infty}\mu_{\mathrm{FS}}(K_R^c).
\]
Since \(K_R\uparrow \mathbb C\setminus\{0\}\) and
\(\mu_{\mathrm{FS}}(\{0\})=0\), we have
\[
        \mu_{\mathrm{FS}}(K_R^c)\to0
        \qquad\text{as }R\to\infty.
\]
Letting \(R\to\infty\), we obtain
\[
        \mathbb E\left[
        \left|
        f\bigl(nD_n(z_I)\bigr)
        -
        f\!\left(\frac2{|z_I|}\right)
        \right|
        \right]
        \to0.
\]
Thus
\[
        \mathbb E|A_n(f)-B_n(f)|\to0.
\]
In particular,
\begin{equation}\label{adsds}
        A_n(f)-B_n(f)\to0
        \qquad\text{in probability}.
\end{equation}

It remains to identify the limit of \(B_n(f)\).  Recall from \eqref{wked} that the empirical zero
measure of Gaussian SU(2) polynomials satisfies the weak convergence 
\[
        \mu_n
        =
        \frac1n\sum_{i=1}^{n}\delta_{z_i}
        \to
        \mu_{\mathrm{FS}}
        \qquad\text{almost surely}.
\]
 Since
\(g\) is bounded and \(\mu_{\mathrm{FS}}\)-almost everywhere continuous, the
Portmanteau theorem gives
\begin{equation}\label{bconds}
        B_n(f)
        =
        \int g\,d\mu_n
        \to
        \int g\,d\mu_{\mathrm{FS}}
        =
        \mathbb E[f(Y)]
        \qquad\text{almost surely}.
\end{equation}
In particular, this convergence also holds in probability.

Combining \eqref{adsds} and \eqref{bconds}, we conclude that
\[
        A_n(f)
        \to
        \mathbb E[f(Y)]
        \qquad\text{in probability}.
\]
This proves \eqref{eq:main-convergence-probability}.
\end{proof}

\end{document}